\documentclass[11pt]{amsart}
\usepackage{amssymb, latexsym, mathrsfs, color, tikz, multirow,mathtools,xcolor,enumerate,caption, subcaption, graphicx,xcolor,comment,multicol}
\usepackage{graphics}
\graphicspath{ {./Diagrams/} }

\usepackage{amsmath,amssymb,amsthm}
\usepackage{xcolor,caption}

\usetikzlibrary{shapes,snakes}

\usepackage{ytableau} %Young Tableau

\numberwithin{equation}{section}
\theoremstyle{plain}
\newtheorem{theorem}[equation]{Theorem}
\newtheorem{proposition}[equation]{Proposition}
\newtheorem{cor}[equation]{Corollary}
\newtheorem{lemma}[equation]{Lemma}
\newtheorem{con}[equation]{Conjecture}
\newtheorem {problem}[equation]{Problem}
\newtheorem {question}[equation]{Question}

\newtheorem{innercustomthm}{Theorem}
\newenvironment{customthm}[1]
  {\renewcommand\theinnercustomthm{#1}\innercustomthm}
  {\endinnercustomthm}

\theoremstyle{definition}
\newtheorem{definition}[equation]{Definition}
\newtheorem{example}[equation]{Example}
\newtheorem{remark}[equation]{Remark}

\def\N{\mathbb N}

\keywords{dice relabeling, cyclotomic polynomials, generating functions}

\subjclass{05A15,05A19}

\title{Dice Relabeling using Square-Sided Dice}
\author{Evelyn Fiore, George D. Nasr, Cooper Stone}
\date{}

\keywords{dice relabeling, cyclotomic polynomials, generating functions}
\subjclass{05A15,05A19}
\address{Department of Mathematics, Augustana University} \email{george.nasr@augie.edu} \email{ljferguson22@ole.augie.edu} \email{castone23@ole.augie.edu}
\begin{document}
\maketitle

\begin{abstract}
We continue recent work of Chao, Gabel, Larson, and Nasr in using cyclotomic polynomials for dice relabeling. In their work, one idea they expand on is finding pairs of dice with different number of sides which maintain the sum frequency of two normal dice. We continue this idea in this paper by studying pairs of dice where the number of sides of each is a different perfect square (which we call ``square-sided" dice). We additionally provide conjectures offering ideas for future exploration.\end{abstract}

\section{Introduction and Summary of Results}

George Sicherman posed and solved the following question.

\begin{question}
 How many ways can one label two six-sided dice so that the frequency of all possible sums remain the same as if they were both labeled $1$ through $6$?
\end{question}

Sicherman found that the answer was two. Either one can use the usual labeling on both dice (often called the ``standard" solution), or one can label one dice $1,2,2,3,3,4$ and the other dice $1,3,4,5,6,8$ (often called the ``Sicherman" dice). This result was discussed further and reported by Martin Gardner \cite{gardner}. Inspired by this work, Broline explored this question for an arbitrary number of platonic solids \cite{b}. Gallian and Rusin addressed the more general question \cite{gc}.

\begin{question}\label{qu:gen}
  Given $n$ dice, each with labels $1$ through $m$, how many way can these dice be relabeled without altering the frequencies of the sums? 
\end{question}
 By encoding the data of the frequencies and the labels on the dice as a generating function, Broline, Gallian, and Rusin observed that one could factor the generating function encoding the frequencies using cyclotomic polynomials (a method we go into further detail on later). Using this technique, Gallian and Rusin were able to demonstrate that for any number of dice, there are three possible dice that could be used to answer Question \ref{qu:gen} if $m$, the number of sides, is a product of two (not necessarily distinct) prime numbers \cite[Theorem 2]{gc}. They additionally have results for when $m$ is a prime power and many other related questions to relabeling dice. 

Many different types of results involving dice relabeling followed. In \cite{sm}, they enumerate the frequency of a particular sum given $n$ $m$-sided dice. In \cite{fs}, they characterize the numbers that can be realized as the sums of relabeled six-sided dice. Other papers explored changing the probabilities of the sums from the usual one given by $n$ $m$-sided dice. For instance, authors of \cite{bmrs,lr,bs,m} explored different questions assuming ``equally likely sums", that is, all sums are equally likely. In \cite{rss}, they consider ``Pythagorean dice" which provide an alternative probability distribution on the possible sums. 

However, there remains many generalizations of Gallian's and Rusin's results involving the case where we use the original probabilities. Indeed, at the end of their paper, Gallian and Rusin leave the readers with two different further explorations of their ideas.
\begin{question}\label{qu:three}
How many relabeling are there in the case where $m=p^2q$ or $m=pqr$ (where $p,q,r$ are distinct primes)? 
\end{question}
\begin{question} \label{qu:dif_size}
Can one find dice, not necessarily with the same number of sides, matching the frequencies of $n$ $m$-sided dice? 
\end{question}

In \cite{firstpaper}, the authors followed the techniques of Gallian and Rusin to provide partial answers to both questions in the case of $n=2$. However, much of their efforts were targeted towards the first question. In this paper we wish to expand further on the second. Specifically, we ask the following special case of Question \ref{qu:dif_size}.

\begin{question} \label{qu:dif_size_square}
Can one find dice, each having a perfect square as its number of sides, not necessarily with the same number of sides, matching the frequencies of $n$ $m$-sided dice? 
\end{question}
\noindent Throughout this paper, we refer to dice whose number of sides is a perfect square as a \textit{square-sided} dice. 

Our paper is organized as follows. In Section \ref{sec:term}, we go over terminology for this paper. In Section \ref{sec:cyc}, we go over how we use generating functions and cyclotomic polynomials to reframe the above questions. In that section, we also include cyclotomic polynomial identities. Our new results start in Section \ref{sec:form}, where we ultimately prove the following.

\begin{customthm}{\ref{theorem:cs2n}}
Let $a,b,m\in \N$ so that $m=ab$. There is an $a^2$- and $b^2$-sided dice which have the same frequency of sums as two $m$-sided dice. 
\end{customthm}
\noindent This result only offers one labeling for such dice---see Corollary \ref{cor:ab} for the explicit description. The trade off here, though, is there is no restriction on $m$.

In contrast, Sections \ref{sec:psquaredq} and Section \ref{sec:pqr}, we add requirements on $m$ by combining Questions \ref{qu:three} and \ref{qu:dif_size_square}, separately dealing with the two cases. In the case of $m=p^2q$, we have nearly completely classified one way of choosing labels for these dice.
\begin{customthm}{\ref{thm:positive}}
Let $p$ and $q$ be distinct prime numbers. There are $6$ pairs of dice, one with $p^4$ sides and the other with $q^2$ sides, which have the same frequency sums as two $p^2q$-sided dice. 
\end{customthm}

\noindent We further are able to provide one more pair of dice in the case where $p=2$ and $q\equiv 1\mod 4$ in Theorem \ref{thm:q01_coeffs}, claiming all other cases lead to negative coefficients (see Conjecture \ref{con:q01}). Despite this, note that one could also conceivably find a pair of $p^2q^2$- and $p^2$-sided dice which could also have the same frequency sum. This is precisely Problem \ref{prob:open}, which we offer to the reader with preliminary thoughts as one potential source of a future project. 

In contrast to our work in Section \ref{sec:psquaredq} for $m=p^2q$, our work in Section \ref{sec:pqr} for $m=pqr$ case is much more preliminary. We state the following.
\begin{customthm}{\ref{thm:pqr}}
Let $p$, $q$, and $r$ be distinct prime numbers. There are $11$ pairs of dice, one with $p^2q^2$ sides and the other with $r^2$ sides, which have the same frequency sums as two $pqr$-sided dice.
\end{customthm}

\noindent As in Section \ref{sec:psquaredq}, we provide one more pair of dice when $p=2$ and $q<r$ in Lemma \ref{lem:pqr}. However, in this section, we leave $33$ open cases conjectured to have negative coefficients (see Conjecture \ref{con:pqr}).

In all our results, note there is a trade-off between finding multiple pairs of square-sided dice versus being able to explicitly classify the labels on the new dice.

\subsubsection*{Acknowledgments}

We thank Augustana Research and Artist Fund for providing initial support funds this project. We also thank Drew Alton and the South Dakota Space Grant Consortium for further funding the work. 

\section{Terminology and Notation}\label{sec:term}

We follow the primary terminology set up by \cite{gc}, namely: 
\begin{definition}\leavevmode
\begin{itemize}
\item The dice labeled $1$ through $m$ is called a \textbf{standard dice}. 
\item A dice with $m$ sides has \textbf{size $m$}.
\end{itemize}
\end{definition}

\noindent We further add the following. 

\begin{definition}\label{def:nms}
Given a set of $n$ dice which have the same frequencies of sums of $n$ standard $m$-sided dice, any one of these dice is called an \textbf{$(n,m)$-solution}. 
\end{definition}

%Thus, Gallian and Rusin showed that when $m$ is a product of two (not necessarily distinct) prime numbers, there are three solutions. When $n=2$, this gives rise to two possible pairs: two standard dice and the two other dice guaranteed by their result. 

\begin{remark}\leavevmode
\begin{enumerate}
\item 
Technically, when \cite{gc} defines ``solution", they require the corresponding dice have size $m$. We remove this condition as results in this paper never have dice with the same size. 
\item We add $n$ and $m$ to the definition of solution in Definition \ref{def:nms} because values will change throughout this paper and this allows conciseness when discussing what our new dice accomplish. That said, note that, as indicated in the introduction, $n$ will only be $2$ throughout this paper. We offer the more general notation as a suggestion to be used in further work.
\end{enumerate}
\end{remark}

Finally, a remark on notation. At times we will want to write out our formulations for sides of a dice. When working with an arbitrary number of sides, we will use $x^{(n)}$ to represent $x$ listed out $n$ times. For instance, we may use $3^{(4)}=3,3,3,3$ to indicate the dice has four sides labeled with $3$.

\begin{example}
The Sicherman dice have labels $1,2^{(2)},3^{(2)},4$ and $1,3,4,5,6,8$.
\end{example}

\section{Using Generating Functions and Cyclotomic Polynomials} \label{sec:cyc}

Question \ref{qu:gen} can be reframed in the following way: How many collections of polynomials $P_1,P_2,\dots, P_n$, all with non-negative integer coefficients, are there so that  $P_i(1)=m$ for all $i$ and
\[P_1P_2\cdots P_n=\left( \sum_{i=1}^m x^i \right)^n=x^n\left( {x^m-1\over x-1}\right)^n?\]

Here, each polynomial $P_i$ is the generating function of a given dice. That is, $[x^j]P_i$, the $j$th coefficient of $P_i$, is the number of sides of the $i$th dice labeled $j$. Thus, $\displaystyle \sum_{i=1}^m x^i$ is the generating function for the standard dice of size $m$. Thus, if we denote the product of the $P_i$'s as $F(x)$, then $[x^j]F$ is the number of ways which a sum of $j$ can be achieved from the $n$ dice corresponding to the polynomials $P_1,P_2,\dots, P_n$. We refer to $F$ as the \textit{frequency polynomial}. Throughout this paper, we will see different formulations of $F$ depending on the size of the dice involved. Regardless of the explicit formulation of $F$, observe this gives a bijection between collections of dice answering our question and factorizations of $F$ with $P_i(1)=m$ and $[x^j]P_i\geq 0$ for all $i$ and $j$. To this end, we also refer to each $P_i$ as \textit{solution}, keeping in mind we really are referring to the dice that $P_i$ represents. 

\begin{remark}
Note that Question \ref{qu:dif_size} only differs from the above description in the evalutions of $P_i$, in that we may have distinct values for when we compute $P_i(1)$. This will be made explicit in the forthcoming sections.
\end{remark}

Observe that addressing the above factorization problem relies on understanding the factorizations of $x^m-1$. The factors of this polynomial is completely understood, as its irreducible factors are the \textit{cyclotomic polynomials}. Results on these polynomials are well understood and we will survey some of what is known here, starting with the definitions.

\begin{definition}
Let $m$ be a non-negative integer. 
\begin{itemize}
\item A (complex-valued) solution to $x^m=1$ is called an \textit{$m$th root of unity}.
\item A \textit{primitive $m$th root of unity} is an $m$th root of unity which is not a $j$th root of unity for any $j<m$.
\item The \textit{cyclotomic polynomial} $\phi_m(x)$ is the polynomial whose roots are the primitive $m$th roots of unity.
\end{itemize}
\end{definition}

The following known identities or readily computed by identities that we will use throughout our paper. 
%In what follows, $\mu(n)$ is the \emph{M\"obius function}, which is $1$ when $n$ is square-free with an even number of prime numbers, $-1$ when $n$ is square-free with an odd number of prime numbers, and $0$ otherwise. Additionally, throughout, we assume $p,q,r$ are a prime numbers. 

\begin{align}
x^n-1 &=\prod_{d|n}\phi_d(x)\label{eq:prod_cyc}\\
\phi_n(x) &=\prod_{d|n}(x^d-1)^{\mu(n/d)}\label{eq:prod_roots}\\
\phi_p(x)&=\sum_{i=0}^{p-1}x^i={x^p-1\over x-1}\label{eq:cyc_prime}\\
\phi_{p^km}(x)&=\phi_{pm}\left(x^{p^{k-1}}\right), \text{where $m$ is $1$ or relatively prime to $p$.}\label{eq:cyc_prime_power_mult}\\
\phi_m(x)\phi_{pm}(x) &=\phi_m(x^p), \text{where prime $p$ does not divide $m$.}\label{eq:divisor} \\
\phi_{n}(1)&=\begin{cases}p & n=p^k\\1 & \text{otherwise} \end{cases}\label{eq:eval_prime_power}\\
\phi_{p^kq}(x) &= \frac{(x^{p^{k-1}}-1)(x^{p^kq}-1)} {(x^{p^k}-1)(x^{p^{k-1}q}-1)}\\
\phi_{pqr}(x)&=\frac{(x^p-1)(x^q-1)(x^r-1)(x^{p q r}-1)}{(x-1)(x^{p q}-1)(x^{pr}-1)(x^{q r}-1)}\label{eq:pqr}\\
\phi_{2n}(x)&=\phi_n(-x)\text{, where $n$ is an odd integer greater than 1}
\label{eq:parity}
\end{align}

\begin{remark}\leavevmode
\begin{enumerate}
\item 
One should note that the factors of the form $x^i-1$ could all be written as $1-x^i$ and the results, as far as this paper is concerned, will remain unchanged. In future sections, we will often write rational functions in both ways, depending on whatever is convenient for our proofs.
\item We will often omit the input to cyclotomic polynomials, writing $\phi_p$ instead of $\phi_p(x)$, for instance.  This allows us to write more complicated expressions while permitting ease of reading.
\end{enumerate}
\end{remark}

We will state one additional result from \cite{firstpaper} which is useful for results in this paper.

%\begin{lemma}
%If $p$ and $q$ are prime numbers, then for all integers $k\geq 0$,
%\[ \prod_{i=0}^k \phi_{p^iq}(x)=\phi_q(x^{p^k})\]\label{lem:product_piq}
%\end{lemma}
%

\begin{lemma}\label{thm:josh_pqr}
For three distinct prime numbers, $p,q,r$,
$$\phi_p(x)\cdot\phi_{pq}(x)\cdot\phi_{pr}(x)\cdot\phi_{pqr}(x)=\phi_p(x^{qr}).$$
\end{lemma}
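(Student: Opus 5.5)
We prove the identity directly from the product formula \eqref{eq:prod_roots}, or equivalently by comparing roots; the product-formula route keeps everything as explicit rational functions, so we take it. We compute each of the four cyclotomic factors on the left as a ratio of binomials $x^d-1$ and then multiply.

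By \eqref{eq:cyc_prime}, $\phi_p=(x^p-1)/(x-1)$. By the $p^kq$ formula with $k=1$ (the roles of $p$ and $q$ being symmetric there),
\[
\phi_{pq}=\frac{(x-1)(x^{pq}-1)}{(x^p-1)(x^q-1)},\qquad \phi_{pr}=\frac{(x-1)(x^{pr}-1)}{(x^p-1)(x^r-1)}.
\]
For $\phi_{pqr}$ we use \eqref{eq:pqr}.

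Multiplying these four expressions:
\begin{itemize}
\item The numerators contribute $(x^p-1)(x-1)^2(x^{pq}-1)(x^{pr}-1)(x^p-1)(x^q-1)(x^r-1)(x^{pqr}-1)$.
\item The denominators contribute $(x-1)(x^p-1)^2(x^q-1)(x^r-1)(x-1)(x^{pq}-1)(x^{pr}-1)(x^{qr}-1)$.
\end{itemize}
After cancelling common factors, what remains is $(x^{pqr}-1)/(x^{qr}-1)$. Substituting $y=x^{qr}$ turns this into $(y^p-1)/(y-1)$, which is $\phi_p(x^{qr})$ by \eqref{eq:cyc_prime}.

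As a cross-check, the root-theoretic argument gives the same result. The roots of $\phi_p(x^{qr})$ are exactly the $x$ with $x^{qr}$ a primitive $p$th root of unity. Since $\gcd(p,qr)=1$, these are the $x$ whose order $d$ divides $pqr$ and is divisible by $p$, i.e. $d\in\{p,pq,pr,pqr\}$. Each such root is simple, and both sides have degree $qr(p-1)$.

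The only real obstacle is the bookkeeping in the cancellation, in particular making sure that the $(x^p-1)$ and $(x-1)$ powers balance exactly. Everything else is immediate from the quoted identities.
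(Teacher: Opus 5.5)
Your proof is correct. The paper gives no proof of its own here, since it quotes the lemma from \cite{firstpaper}, so there is nothing to match line by line. Your computation is the rational-function cancellation technique the paper describes at the end of Section~\ref{sec:cyc}; its example $\phi_p\phi_{pq}=(x^{pq}-1)/(x^q-1)$ is the two-factor case of your calculation. The bookkeeping you flag does balance. Numerator and denominator both contain $(x-1)^2(x^p-1)^2(x^q-1)(x^r-1)(x^{pq}-1)(x^{pr}-1)$, which leaves $(x^{pqr}-1)/(x^{qr}-1)=\phi_p(x^{qr})$.

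A shorter route avoids the bookkeeping by applying \eqref{eq:divisor} three times. First, $\phi_p\phi_{pq}=\phi_p(x^q)$ and $\phi_{pr}\phi_{pqr}=\phi_{pr}(x^q)$, since $q\nmid pr$. Then $\phi_p(y)\phi_{pr}(y)=\phi_p(y^r)$ with $y=x^q$. This factor-grouping style is what the paper uses in its own positivity proofs. Your version has the advantage of needing only the explicit product formulas.

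Your root-counting cross-check is also sound. To make it a standalone proof, you would need to say why the roots of $\phi_p(x^{qr})$ are simple (it divides $x^{pqr}-1$) and that both sides are monic.
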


Before proceeding to our main arguments and results, we discuss one final technique that we will take extensive advantage of. As we have seen, cyclotomic polynomials can be expressed as rational functions. We will often consider product of these cyclotomic polynomials, and because of the similarities in the different formulas for the cases we will be considering, a lot of cancellation occurs between common factors. For example, for primes $p$ and $q$, we have 
\[\phi_p\phi_{pq}={x^p-1\over x-1}{(x-1)(x^{pq}-1)\over (x^p-1)(x^q-1)}={x^{pq}-1\over x^q-1}.\]
We can further ``simplify" this by writing it as a product of (possibly finite) formal series:
\[{x^{pq}-1\over x^q-1}={1-x^{pq}\over 1-x^q}=(1-x^{pq})\sum_{i=0}^\infty x^{iq}.\]
Writing our functions in this way will be extremely useful in demonstrating when certain options for solutions yield negative coefficients, and thus are not solutions.

\section{A method for forming two square-sided dice}\label{sec:form}

In this section, we demonstrated a method for finding two dice whose sizes are perfect squares with the same frequency of sums for the two standard dice of any fixed size.

\begin{theorem}\label{theorem:cs2n}
Let $m,a,b\in \N$ so that $m = ab$. There is an $a^2$- and $b^2$-sided dice which form an $(2,m)$-solution.
\end{theorem}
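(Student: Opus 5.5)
We need polynomials $P,Q$ with nonnegative integer coefficients, $P(1)=a^2$, $Q(1)=b^2$, and
\[P(x)Q(x)=x^2\left(\frac{x^{ab}-1}{x-1}\right)^2 .\]
The plan is to write the frequency polynomial as a product of two cyclotomic-style quotients and swap factors between the two copies, rather than to factor it into individual $\phi_d$'s. The basic identity is the telescoping factorization
\[\frac{x^{ab}-1}{x-1}=\frac{x^{a}-1}{x-1}\cdot\frac{x^{ab}-1}{x^{a}-1}=\left(\sum_{i=0}^{a-1}x^i\right)\left(\sum_{j=0}^{b-1}x^{aj}\right),\]
and symmetrically with the roles of $a$ and $b$ exchanged:
\[\frac{x^{ab}-1}{x-1}=\left(\sum_{i=0}^{b-1}x^i\right)\left(\sum_{j=0}^{a-1}x^{bj}\right).\]
Each factor is a polynomial with all coefficients equal to $0$ or $1$.

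We use the first factorization for one copy of the standard die and the second for the other. The four factors evaluate at $1$ to $a$, $b$, $b$, $a$ respectively. Grouping the two factors that evaluate to $a$ gives
\[P(x)=x\left(\sum_{i=0}^{a-1}x^i\right)\left(\sum_{j=0}^{a-1}x^{bj}\right),\]
and grouping the two that evaluate to $b$ gives
\[Q(x)=x\left(\sum_{i=0}^{b-1}x^i\right)\left(\sum_{j=0}^{b-1}x^{aj}\right).\]
Then $P(1)=a^2$ and $Q(1)=b^2$. Moreover $PQ=x^2\left(\frac{x^{ab}-1}{x-1}\right)^2$, since we have only regrouped the same four factors together with the two factors of $x$.

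Nonnegativity is immediate, because each of $P$ and $Q$ is a product of polynomials with nonnegative coefficients. The $x$ factors ensure every label is at least $1$, so $P$ and $Q$ are genuine dice generating functions. Expanding gives the explicit labels: the $a^2$-sided die has labels $1+i+bj$ for $0\le i,j\le a-1$, and the $b^2$-sided die has labels $1+i+aj$ for $0\le i,j\le b-1$. Labels may repeat, since the coefficients can exceed $1$ when $b<a$. This explicit form is presumably what Corollary~\ref{cor:ab} records.

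The only real obstacle is noticing the right regrouping. A naive factorization into irreducible $\phi_d$ with $d\mid ab$ and $d>1$, distributing the $\phi_d$'s to hit the evaluations $a^2$ and $b^2$, makes nonnegativity hard to see. The telescoping quotients above sidestep this entirely.

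In cyclotomic language, $\sum_{i<a}x^i=\prod_{d\mid a,\,d>1}\phi_d$, while $\sum_{j<b}x^{aj}$ is the product of the $\phi_d$ with $d\mid ab$ and $d\nmid a$. Writing the proof this way makes it consistent with the paper's framework. I would note this correspondence briefly, but rely on the geometric-series form for positivity.
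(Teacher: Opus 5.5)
Your proof is correct, but it groups the factors differently from the paper. The paper applies the single factorization $\frac{1-x^{ab}}{1-x}=\frac{1-x^a}{1-x}\cdot\frac{1-x^{ab}}{1-x^a}$ to both copies of the standard die. It then puts both $a$-parts on one die and both $b$-parts on the other, taking $A(x)=x\frac{(1-x^a)^2}{(1-x)^2}$ and $B(x)=x\frac{(1-x^{ab})^2}{(1-x^a)^2}$. You instead factor the two copies in opposite orders and pair $\frac{1-x^a}{1-x}$ with $\frac{1-x^{ab}}{1-x^b}$.

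Nonnegativity and the evaluations at $1$ are equally immediate either way, but the resulting dice differ. The paper's $a^2$-sided die is the sum of two standard $a$-sided dice. Its $b^2$-sided die is the sum of two dice labeled $0,a,\dots,(b-1)a$, shifted by $1$. Those are the labels recorded in Corollary~\ref{cor:ab}, so your guess that the corollary lists your labels $1+i+bj$ and $1+i+aj$ is mistaken.

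Your construction is symmetric in $a$ and $b$, while the paper's is not. So for $a\neq b$ with $a,b>1$, yours gives a pair distinct from both of the paper's. For $m=6$ it gives $1,2,4,5$ and $1,2,3^{(2)},4,5^{(2)},6,7$, compared with the two pairs in the paper's example. On the other hand, when $a=b$ your $P$ and $Q$ both collapse to the standard die $x\frac{1-x^{a^2}}{1-x}$. The paper's choice still yields a nonstandard pair in that case (e.g.\ $1,2^{(2)},3$ and $1,3^{(2)},5$ for $m=4$). Either construction proves the theorem as stated.
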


\begin{proof}
%% should we specifiy that n^2 is equal to (ab)^2
We define two functions, $A(x)$ and $B(x)$, to be the generating functions for the $a^2$ and $b^2$ sided dice: 
$$A(x)=x\frac{(1-x^a)^2}{(1-x)^2}$$ %%I feel like I need to show where this came from, but I'm struggling to figure that out
and 
$$B(x)=x\frac{(1-x^{ab})^2}{(1-x^a)^2}.$$
%$A(x)$ is the $a^2$ model, and by Proposition \ref{prop:poly} becomes

Rewriting $A(x)$ in polynomial form, we have 
$$A(x)=ax^a+\sum_{k=1}^{a-1}k(x^k+x^{2a-k}),$$
and so 
$$A(1)=a+\sum_{k=1}^{a-1}2k=a+a(a-1)=a^2,$$
where the sum simplifies due to Proposition \ref{prop:triangle}.
Now, note $B(x)$ can be rewritten as
$$B(x)=x\frac{(1-(x^a)^b)^2}{(1-(x^a))^2}$$
and so when $B(x)$ is written in polynomial form, we see we will also have $B(1)=b^2$. 

Finally, note, 
\[A(x)B(x)=x^2{(1-x^{ab})^2\over (1-x)^2},\]
and since $ab=m$, these function are generating functions for a pair of $a^2$- and $b^2$-sided dice giving the same frequencies of sums of two $n$-sided dice, as desired.

\end{proof}

By interpreting the coefficients of the polynomials of $A(x)$ and $B(x)$ in the prior result, one gets the following. 
\begin{cor}\label{cor:ab}

One can form an $a^2$-sided dice with labels
\[1,2^{(2)}, 3^{(3)},\cdots, (a-1)^{(a-1)}, a^{(a)}, (a+1)^{(a-1)}\cdots, (2a-2)^{(2)},2a-1\]
and a $b^2$-sided dice with labels {\small
\[ 1,(a+1)^{(2)},(2a+1)^{(3)},(3a+1)^{(4)},\cdots,(n-b-a+1)^{(b-1)},(n-a+1)^{(b)},(n-a+b+1)^{(b-1)},\cdots, (2n-3a+1)^{(2)}, 2n-2a+1\]}
so that these dice are $(2,m)$-solutions, where $m-ab$.
\end{cor}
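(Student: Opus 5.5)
The plan is to read the labels directly off the coefficients of the two generating functions $A(x)$ and $B(x)$ from the proof of Theorem \ref{theorem:cs2n}. The coefficient $[x^j]$ of a die's polynomial counts the faces labeled $j$, so it suffices to expand each polynomial explicitly. The statement's $n$ should be read as $m=ab$, and the trailing ``$m-ab$'' as the condition $m=ab$.

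For the $a^2$-sided die, I would use the identity $\frac{1-x^a}{1-x}=\sum_{i=0}^{a-1}x^i$. Squaring it and convolving gives
\[\left(\sum_{i=0}^{a-1}x^i\right)^2=\sum_{s=0}^{2a-2}c_s x^s, \qquad c_s=\min(s+1,\,2a-1-s).\]
Multiplying by $x$ shifts exponents by one, so label $k$ appears $k$ times for $1\le k\le a$ and $2a-k$ times for $a\le k\le 2a-1$. This is exactly the polynomial form $A(x)=ax^a+\sum_{k=1}^{a-1}k(x^k+x^{2a-k})$ already displayed in the proof, and it yields the first list $1,2^{(2)},\dots,a^{(a)},\dots,(2a-2)^{(2)},2a-1$.

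For the $b^2$-sided die, the same expansion applies with $x$ replaced by $y=x^a$ and $a$ replaced by $b$:
\[B(x)=x\left(\sum_{i=0}^{b-1}y^i\right)^2=\sum_{s=0}^{2b-2}\min(s+1,\,2b-1-s)\,x^{as+1}.\]
The face labels are therefore $as+1$ for $s=0,\dots,2b-2$, with multiplicity increasing from $1$ up to $b$ at $s=b-1$ and then decreasing back to $1$.

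The last step is to match these exponents with the list in the statement, and I expect this index bookkeeping to be the only real obstacle. At the peak $s=b-1$ the label is $a(b-1)+1=m-a+1$, which appears $b$ times. The neighbours $s=b-2$ and $s=b$ give labels $m-2a+1$ and $m+1$, each with multiplicity $b-1$. The top label $s=2b-2$ is $2m-2a+1$, and the second-highest, with multiplicity $2$, is $2m-3a+1$. The lower end reads $1,(a+1)^{(2)},(2a+1)^{(3)},\dots$.

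These values should be compared with the printed ones. The printed $(n-b-a+1)^{(b-1)}$ and $(n-a+b+1)^{(b-1)}$ look like typos for $m-2a+1$ and $m+1$, unless $a=b$, in which case they coincide. I would state the corrected labels explicitly and note that they agree with the pattern given. Finally, the $(2,m)$-solution property follows at once from $A(x)B(x)=x^2\left(\frac{1-x^m}{1-x}\right)^2$ together with $A(1)=a^2$ and $B(1)=b^2$, all established in the proof of Theorem \ref{theorem:cs2n}.
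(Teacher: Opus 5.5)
Your proposal is correct and takes the same approach as the paper: you write $A$ and $B$ in polynomial form and read off the coefficients. Your $\sum_{s}\min(s+1,2b-1-s)\,x^{as+1}$ is the paper's $B(x)=bx^{m-a+1}+\sum_{k=1}^{b-1}k\left(x^{ak-a+1}+x^{2m-ak-a+1}\right)$ reindexed, and you are right that the printed labels $(n-b-a+1)^{(b-1)}$ and $(n-a+b+1)^{(b-1)}$ should read $m-2a+1$ and $m+1$, which the paper's own formula (at $k=b-1$) and its $a=2$, $b=3$ example both confirm.
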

\begin{proof}
The proof of the prior result already includes the generating function for $A(x)$ written in polynomial form. Note 
\[B(x)=bx^{m-a+1}+\sum_{k=1}^{b-1}k(x^{ak-a+1}+x^{2m-ak-a+1}).\]
\end{proof}

Interestingly, note that assuming $a,b>1$, swapping the roles of $a$ and $b$ yields different labels. We demonstrate this in the next example.

\begin{example} 

Let $n=6=2\times 3$. If we let $a=2$ and $b=3$, our generating functions from the prior result yield a $4$- and $9$-sided dice as given in the following table. 
%\begin{center}

%A-typical Dice Distribution Example
%\begin{tabular}{l|llllllllllll}
%  & $1$ & $2$ & $3$ & $4$ & $4$ & $5$ & $5$ & $6 $ & $6$ & $7$  & $8$  & $9$  \\ \hline
%$1$ & $2$ & $3$ & $4$ & $5$ & $5$ & $6$ & $6$ & $7$ & $7$ & $8 $ &$ 9$  & $10$ \\
%$2$ & $3$ & $4$ & $5$ & $6$ & $6$ & $7$ & $7$ & $8$ & $8$ & $9$  & $10$ & $11$ \\
%$3$ & $4$ & $5$ & $6$ & $7$ & $7$ & $8$ & $8$ & $9$ & $9$ & $10$ & $11 $& $12$
%\end{tabular}
%\end{center}
%\end{example}

%\begin{example}
\begin{center}

\begin{tabular}{c|cccccccccc}
  & $1$ & $3$ & $3$ & $5$ & $5$ & $5$ & $7$ & $7 $ & $9$  \\ \hline
$1$ & $2$ & $4$ & $4$ & $6$ & $6$ & $6$ & $8$ & $8$ & $10$ & \\
$2$ & $3$ & $5$ & $5$ & $7$ & $7$ & $7$ & $9$ & $9$ & $11$ \\
$2$ & $3$ & $5$ & $5$ & $7$ & $7$ & $7$ & $9$ & $9$ & $11$ \\
$3$ & $4$ & $6$ & $6$ & $8$ & $8$ & $8$ & $10$ & $10$ & $12$
\end{tabular}
\end{center}

If instead we let $a=3$ and $b=2$, note interestingly that while we still get a $4$- and $9$-sided dice, the prior result offers alternative dice labels.
\begin{center}

\begin{tabular}{c|ccccccccc}
  & $1$ & $2$ & $2$ & $3$ & $3$ & $3$ & $4$ & $4$ & $5$ \\ \hline
  
$1$ & $2$ & $3$ & $3$ & $4$ & $4$ & $4$ & $5$ & $5$ & $6$ \\

$4$ & $5$& $6$ & $6$ & $7$ & $7$ & $7$ & $8$ & $8$ & $9$ \\

$4$ & $5$ & $6$& $6$ & $7$ & $7$& $7$ & $8$& $8$ & $9$\\

$7$ & $8$& $9$& $9$ & $10$ & $10$ & $10$ & $11$ & $11$ & $12$
\end{tabular}

\end{center}

Observe in both examples, we get the same frequencies of sums as two standard $6$-sided dice. That is, these dice are indeed $(2,6)$-solutions.
\end{example}

\section{Square-sided dice for pairs of $p^2q$-sided dice}\label{sec:psquaredq}

In this section, we explore $(2,p^2q)$-solutions. Note that the generating function for the frequency sum would be
\[x^2{(1-x^{p^2q})^2\over (1-x)^2}=x^2\phi_p^2\phi_q^2\phi_{p^2}^2\phi_{pq}^2\phi_{p^2q}^2\]
by Equation \eqref{eq:prod_cyc}.

There are two different pairs of square-sided dice we could find for them to be a $(2,p^2q)$-solution: either the dice have sizes $p^4$ and $q^2$ or they have sizes $p^2q^2$ and $p^2$. We proceed with the former, leaving the later as an open case for the reader to explore: see Problem \ref{prob:open}. We start with the following definition.

\begin{definition}
%% Should we change a and b in Q to alpha and beta?
For $a,b\in \{0,1,2\}$, we let
$$P_{a,b}(x):=x{\phi}_{p^2}^2{\phi}_{p}^2{\phi}_{pq}^a{\phi}_{p^2q}^b$$
and
$$Q_{a,b}(x):=x{\phi}_{q}^2{\phi}_{pq}^a{\phi}_{p^2q}^b.$$

\end{definition}

\begin{remark}

Because the values for $a$ and $b$ are single-digit numbers, we often omit the comma from the notation for these polynomials, writing $P_{ab}$ instead of $P_{a,b}$ when clear. 
\end{remark}

By Equation \eqref{eq:eval_prime_power}, note that $P_{ab}(1)\cdot Q_{ab}(1)=p^4\cdot q^2=(p^2q)^2$. Thus, the pair $P_{ab}$ and $Q_{2-a,2-b}$ would give generating functions for a $p^4$- and $q^2$-sided dice which are $(2,p^2q)$-solutions. For $a,b\in\{0,1,2\}$, we are curious to classify the ones that lead to valid solutions, that is, the cases where both polynomials have positive coefficients.

Since there are three values for both $a$ and $b$, there are nine possible pairs of functions to consider. Table \ref{tab:positive} gives a summary of the values for $a$ and $b$ where both $P_{ab}$ and $Q_{2-a,2-b}$ have positive coefficients, with one conditional case.

\begin{table}[h]

\begin{center}
\begin{tabular}{|ccccc|c|}\hline
$a$ & $b$ & $\leftrightarrow$ & $\alpha$ & $\beta$& Condition\\
\hline
$0$ & $0$ & $\leftrightarrow$ & $2$ & $2$&\\
$0$ & $1$ & $\leftrightarrow$ & $2$ & $1$&\\
$0$ & $2$ & $\leftrightarrow$ & $2$ & $0$&\\
$1$ & $1$ & $\leftrightarrow$ & $1$ & $1$&\\
$1$ & $2$ & $\leftrightarrow$ & $1$ & $0$&\\
$2$ & $2$ & $\leftrightarrow$ & $0$ & $0$&\\
$2$ & $1$ & $\leftrightarrow$ & $0$ & $1$ & $p=2,\ q\equiv 1\mod 4$\\
\hline
\end{tabular}

\caption{Cases where $P_{ab}$ and $Q_{\alpha\beta}$ has positive coefficients.} \label{tab:positive}
\end{center}
\end{table}

%\cooper{Adjust the following statement for cases that are always positive}

\begin{theorem}\label{thm:positive}
The first six rows of Table \ref{tab:positive} provides cases $P_{ab}\text{ and }Q_{\alpha\beta} \text{ always have positive coefficients. }$

%Further, $Q_{01}$ is conditionally positive when $p=2$ and $q\equiv 3 \mod 4$
\end{theorem}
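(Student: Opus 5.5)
The plan is to avoid expanding anything. Each of the twelve polynomials $P_{00},P_{01},P_{02},P_{11},P_{12},P_{22}$ and $Q_{22},Q_{21},Q_{20},Q_{11},Q_{10},Q_{00}$ is a product of cyclotomic polynomials. I will group these factors into blocks, each equal to a geometric-type sum $\phi_\ell(x^k)=\sum_{i=0}^{\ell-1}x^{ik}$ with $\ell$ prime. Every such block visibly has nonnegative coefficients, and a product of polynomials with nonnegative coefficients again has nonnegative coefficients. The factor $x$ in front is harmless, and the evaluations at $1$ were already checked via \eqref{eq:eval_prime_power}. So this gives the theorem (reading ``positive'' as ``nonnegative'', which is all a dice labeling needs).

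The regrouping uses three identities. First, \eqref{eq:divisor} gives $\phi_p\phi_{pq}=\phi_p(x^q)$ and $\phi_q\phi_{pq}=\phi_q(x^p)$. Second, combining \eqref{eq:divisor} with \eqref{eq:cyc_prime_power_mult} gives $\phi_{p^2}\phi_{p^2q}=\phi_{p^2}(x^q)=\phi_p(x^{pq})$, and likewise $\phi_q\phi_{pq}\phi_{p^2q}=\phi_q(x^{p^2})$. Third, $\phi_{p^2}=\phi_p(x^p)$.

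For the $P$'s I pair each $\phi_{pq}$ with a $\phi_p$ and each $\phi_{p^2q}$ with a $\phi_{p^2}$:
\begin{align*}
P_{00}&=x\,\phi_p^2\,\phi_p(x^p)^2, & P_{01}&=x\,\phi_p^2\,\phi_p(x^p)\,\phi_p(x^{pq}),\\
P_{02}&=x\,\phi_p^2\,\phi_p(x^{pq})^2, & P_{11}&=x\,\phi_p\,\phi_p(x^q)\,\phi_p(x^p)\,\phi_p(x^{pq}),\\
P_{12}&=x\,\phi_p\,\phi_p(x^q)\,\phi_p(x^{pq})^2, & P_{22}&=x\,\phi_p(x^q)^2\,\phi_p(x^{pq})^2.
\end{align*}
For the $Q$'s I absorb $\phi_{pq}$ and $\phi_{p^2q}$ into copies of $\phi_q$:
\begin{align*}
Q_{00}&=x\,\phi_q^2, & Q_{10}&=x\,\phi_q\,\phi_q(x^p), & Q_{11}&=x\,\phi_q\,\phi_q(x^{p^2}),\\
Q_{20}&=x\,\phi_q(x^p)^2, & Q_{21}&=x\,\phi_q(x^p)\,\phi_q(x^{p^2}), & Q_{22}&=x\,\phi_q(x^{p^2})^2.
\end{align*}

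The only point needing care is the bookkeeping. In each of the six rows of Table~\ref{tab:positive}, the exponents must never ask for more $\phi_{pq}$ or $\phi_{p^2q}$ factors than there are available partners. For $P$ this means $a\le 2$ partners from $\phi_p$ and $b\le 2$ from $\phi_{p^2}$. For $Q$ it means $\alpha\ge\beta$, which is exactly why $Q_{01}$ (the seventh row) is excluded. Beyond that there is no real obstacle. The excluded row is where this pairing fails and a genuine coefficient analysis is required, but it is not part of this theorem.
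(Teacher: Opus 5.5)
Your proposal is correct and follows essentially the same route as the paper. Both arguments use \eqref{eq:divisor} and \eqref{eq:cyc_prime_power_mult} to absorb each $\phi_{pq}$ and $\phi_{p^2q}$ into a partner factor, so that every polynomial becomes a product of evaluations $\phi_p(x^k)$ or $\phi_q(x^k)$, which visibly have nonnegative coefficients. All twelve of your factorizations check out, and they are cleaner than the paper's written versions, which contain slips: the paper's $Q_{22}=x\phi_q^2(x^{pq})$ and $Q_{11}=x\phi_p(x^q)\phi_q(x^{p^2})$ should read $x\phi_q(x^{p^2})^2$ and $x\phi_q\,\phi_q(x^{p^2})$, as you have them.
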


\begin{proof} 
We provide the proof of three cases here. Remaining cases can be found in Appendix \ref{sec:ids}. Throughout we regularly utilize known cyclotomic identities provided in Section \ref{sec:cyc}. Our standard technique will be to rewrite each function as a product of evaluations of either $\phi_p(x)$ or $\phi_q(x)$, which will immediately have positive coefficients due to Equation \eqref{eq:cyc_prime}.

First, we demonstrate the pair $P_{00}$ and $Q_{22}$ both have positive coefficients. 

$$P_{00}(x)=x{\phi}_{p}^2{\phi}_{p^2}^2=x\phi_{p}^2(x)\phi_{p}^2(x^p),$$
which has positive coefficients.

$$Q_{22}(x)=x\phi_q^2\phi_{pq}^2\phi_{p^2q}^2=x(\phi_q\phi_{pq})^2\phi_{pq}^2(x^p).$$
Equation \eqref{eq:divisor} 
%and Proposition \ref{prop:cyc_mult} 
begets
$$Q_{22}(x)=x\phi_q^2(x^p)\frac{\phi_{q}^2(x^{pq})}{\phi_q^2(x^p)}=x\phi_q^2(x^{pq})$$
which again has positive coefficients.

We next demonstrate the pair $P_{02}(x)$ and $Q_{20}(x)$ both have positive coefficients. First, note
$$P_{02}(x)=x\phi_p^2\phi_{p^2}^2\phi_{p^2q}^2=x\phi_{p}^2(\phi_{p^2}\phi_{p^2q})^2.$$
Equation \eqref{eq:divisor} shows
$$P_{02}(x)=x\phi_p^2\phi_{p^2}^2(x^q)$$
which after using Equation \eqref{eq:cyc_prime_power_mult} becomes
$$P_{02}(x)=xp\phi_p^2\phi_p^2(x^{pq}).$$

Now note

$$Q_{20}(x)=x\phi_q^2\phi_{pq}^2=x(\phi_q\phi_{pq})^2.$$
After using Equation \eqref{eq:divisor}, we get
$$Q_{20}(x)=x\phi_q^2(x^p).$$

Finally, we demonstrate the pair $P_{11}(x)$ and $Q_{11}(x)$ have positive coefficients. Similar identities will be used as in the prior two cases, so we omit citing these for this case. First, note
$$P_{11}(x)=x\phi_p^2\phi_{p^2}^2\phi_{pq}\phi_{p^2q}=x(\phi_p(\phi_p\phi_{pq})(\phi_{p^2}(\phi_{p^2}\phi_{p^2q}))=x\phi_p\phi_p(x^q)\phi_{p^2}\phi_{p^2}(x^q).$$

Next, note
$$Q_{11}(x)=x\phi_q^2\phi_{pq}\phi_{p^2q}=x(\phi_{q}\phi_{pq})(\phi_{q}\phi_{p^2q})=x\phi_{p}(x^q)\phi_{q}(x^{p^2})$$
as desired.
\end{proof}

Thus, we now have six different ways of taking a pair of $p^2q$-sided dice, and finding alternative dice with $p^4$ and $q^2$ sides with the same sum frequency.

\begin{example}
We let $p=2$ and $q=3$. One can verify in this case that 
\[A_{01}=x\phi_2^2\phi_{4}^2\phi_{12}=x + 2 x^2 + 2 x^3 + 2 x^4 + x^5 + x^7 + 2 x^8 + 2 x^9 + 2 x^{10} + x^{11}\]
and 
\[B_{21}=x\phi_3^2\phi_{6}^2\phi_{12}=x + x^3 + 2 x^5 + x^7 + 2 x^9 + x^{11} + x^{13} \]
Thus, these are generating functions for a pair of dice which are $(2,12)$-solutions, as one can verify with the following table.

\begin{center}
\begin{tabular}{c|ccccccccc}
 & $1$ & $3$ & $5$ & $5$ & $7$ & $9$ & $9$ & $11$ & $13$ \\
\hline
$1$  & $2$  & $4$  & $6$  & $6$  & $8$  & $10$ & $10$ & $12$ & $14$ \\
$2$  & $3$  & $5$  & $7$  & $7$  & $9$  & $11$ & $11$ & $13$ & $15$ \\
$2$  & $3$  & $5$  & $7$  & $7$  & $9$  & $11$ & $11$ & $13$ & $15$ \\
$3$  & $4$  & $6$  & $8$  & $8$  & $10$ & $12$ & $12$ & $14$ & $16$ \\
$3$  & $4$  & $6$  & $8$  & $8$  & $10$ & $12$ & $12$ & $14$ & $16$ \\
$4$  & $5$  & $7$  & $9$  & $9$  & $11$ & $13$ & $13$ & $15$ & $17$ \\
$4$  & $5$  & $7$  & $9$  & $9$  & $11$ & $13$ & $13$ & $15$ & $17$ \\
$5$  & $6$  & $8$  & $10$ & $10$ & $12$ & $14$ & $14$ & $16$ & $18$ \\
$7$  & $8$  & $10$ & $12$ & $12$ & $14$ & $16$ & $16$ & $18$ & $20$ \\
$8$  & $9$  & $11$ & $13$ & $13$ & $15$ & $17$ & $17$ & $19$ & $21$ \\
$8$  & $9$  & $11$ & $13$ & $13$ & $15$ & $17$ & $17$ & $19$ & $21$ \\
$9$  & $10$ & $12$ & $14$ & $14$ & $16$ & $18$ & $18$ & $20$ & $22$ \\
$9$  & $10$ & $12$ & $14$ & $14$ & $16$ & $18$ & $18$ & $20$ & $22$ \\
$10$ & $11$ & $13$ & $15$ & $15$ & $17$ & $19$ & $19$ & $21$ & $23$ \\
$10$ & $11$ & $13$ & $15$ & $15$ & $17$ & $19$ & $19$ & $21$ & $23$ \\
$11$ & $12$ & $14$ & $16$ & $16$ & $18$ & $20$ & $20$ & $22$ & $24$
\end{tabular}
\end{center}
\end{example}

%For all remaining cases, see the appendix for the resulting proofs.
%\\

We save the last entry of Table \ref{tab:positive} to be the last case we deal with; this discussion starts with Theorem \ref{thm:q01_coeffs}. For now, we turn our attention to other two remaining pairs of $P_{ab}$ and $Q_{2-a,2-b}$ to discuss. In two cases, $Q_{2-a,2-b}$ will always have two negative coefficients, as our next two Lemmas demonstrate.

\begin{lemma}
$Q_{12}$ has a negative coefficient. 
\end{lemma}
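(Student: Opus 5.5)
The plan is to follow the technique of Section \ref{sec:cyc}: write $Q_{12}$ as a rational function in factors of the form $1-x^k$, expand it as a product of a short polynomial and formal geometric series, and then locate one specific low-degree coefficient that is forced to be negative.

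First I will simplify using Equation \eqref{eq:divisor}. It gives $\phi_q\phi_{pq}=\phi_q(x^p)$ and $\phi_q\phi_{pq}\phi_{p^2q}=\phi_q(x^{p^2})$, so $\phi_{p^2q}=\phi_q(x^{p^2})/\phi_q(x^p)$. Substituting these into $Q_{12}=x\phi_q^2\phi_{pq}\phi_{p^2q}^2$ and then using Equation \eqref{eq:cyc_prime} yields
\[Q_{12}(x)=x\,\frac{(1-x^q)(1-x^p)(1-x^{p^2q})^2}{(1-x)(1-x^{pq})(1-x^{p^2})^2}.\]
I then regroup this as
\[Q_{12}(x)=x\,R(x)\,(1-x^{p^2q})^2\sum_{i\ge 0}x^{ipq}\sum_{j\ge0}(j+1)x^{jp^2},\qquad R(x)=(1-x^p)\phi_q(x).\]
When $p<q$, we have $R(x)=\sum_{i=0}^{p-1}x^i-\sum_{i=q}^{q+p-1}x^i$. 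When $p>q$, the same expression holds with the roles of $p$ and $q$ swapped.

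Next I will treat the two orderings of the primes separately. If $p>q$, the term $-x^p$ of $R$ appears at degree $p$. Below degree $pq$ and $p^2$, the series factors contribute only their constant term, since $p<pq$ and $p<p^2$. The positive part of $R$ occupies degrees $0,\dots,q-1<p$, so nothing can cancel it, and the coefficient of $x^{p+1}$ in $Q_{12}$ equals $-1$. If $p<q$, the negative block of $R$ sits in degrees $q,\dots,q+p-1$. Here I must check that the positive contributions from $R$'s positive block shifted by multiples of $p^2$ and $pq$ do not cover that whole block. A shifted positive block lands in degrees $s,\dots,s+p-1$, where $s=jp^2+ipq$. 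Since $q$ is not divisible by $p$, we have $s\neq q$, so no shift lines up exactly with the negative block. I will therefore choose the degree $d\in\{q,q+p-1\}$ at the end of the negative block that avoids these shifted windows. Concretely, I take the first shift $s>q$, or the last shift $s<q$, and pick the endpoint of $[q,q+p-1]$ on the other side, so that the coefficient of $x^{d+1}$ is $-1$. I will also have to keep track of negative blocks that are themselves shifted, but these only make the coefficient more negative.

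The main obstacle is the case $p<q$, specifically verifying that for some endpoint of the negative block the total positive contribution, weighted by the coefficients $j+1$, is zero. Because the shifts $jp^2+ipq$ are all multiples of $p$ and each positive window has length exactly $p$, I expect the windows meeting $[q,q+p-1]$ to come from a single shift. That shift covers only one side of the negative block, leaving the other endpoint uncovered. This is the bookkeeping step I would write out carefully. As a fallback, if the low-degree analysis gets tangled, I would use the palindromic symmetry of $Q_{12}$ and examine the top-degree coefficients instead.
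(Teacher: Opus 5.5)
Your proposal is correct and is essentially the paper's proof: the same rational form for $Q_{12}/x$, the same case split on $\max(p,q)$ with the coefficient at degree $p$ when $p>q$, and, when $p<q$, a degree in $[q,q+p-1]$ that no positive window $[jp^2,jp^2+p-1]$ covers, which is exactly what Lemma \ref{lemma:qpk} supplies through the condition $\left\lfloor\frac{q+k}{p^2}\right\rfloor=\left\lfloor\frac{q+k-p}{p^2}\right\rfloor$. When you write out the single-window claim, the fact that the shifts are multiples of $p$ is not enough (for $p=2$, $q=3$ the shifts $4$ and $6$ are adjacent multiples of $p$); instead observe that shifts with $i\ge 1$ are at least $pq>q+p-1$ and so never reach the block, while the remaining shifts $jp^2$ are $p^2>2p-2$ apart, so at most one window meets $[q,q+p-1]$ and one endpoint stays uncovered.
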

\begin{proof} % should the summations be squared?
After simplifying and writing the rational expressions in series notation, we have
\[Q_{12}=x(1-x^q)(1-x^p)(1-x^{p^2q})^2 \left(\sum_{i\geq 0}x^{i}\right)\left(\sum_{i\geq 0}(i+1)x^{p^2i}\right)\left(\sum_{i\geq 0}x^{pqi}\right)\]

We proceed by cases depending on the value of $\max(p,q)$. 
If $\max(p,q)=p$, observe 
\[[x^p](Q_{12}/x)=-1-1+1=-1.\]
These coefficient come only from the factors $(1-x^q)$, $(1-x^p)$, and $\displaystyle \sum_{i\geq 0}x^{i}$.

Suppose otherwise that $\max(p,q)=q$. We claim in this case that there exists an integer $k$ so that $0\leq k<p$ so that $[x^{q+k}](Q_{12}/x)=-1$. Note here it is possible to have nonzero multiples of $p^2$ smaller than $q$, so in addition to the aforementioned factors, we additionally have $\displaystyle \left(\sum_{i\geq 0}(i+1)x^{p^2i}\right)$ as a possible factors, but no others as for our chosen $k$, $q+k<pq<p^2q$. Terms in this factor may contribute to the coefficient $x^{q+k}$ when multiplied by the appropriate term of $\displaystyle \sum_{i\geq 0}x^{i}$ alone or the appropriate term of $\displaystyle \sum_{i\geq 0}x^{i}$ along with the $-x^p$ factor appearing in $(1-x^p)$. That is, 

\begin{align*}
[x^{q+k}](Q_{12}/x)&=[x^{q+k}](1-x^q)(1-x^p)\left(\sum_{i\geq 0}x^{i}\right)\left(\sum_{i\geq 0}(i+1)x^{p^2i}\right)\\
&=[x^{q+k}]\left(-x^qx^k+\sum_{i=0}^{\left\lfloor {q+k\over p^2}\right\rfloor}(i+1)x^{ip^2}x^{q+k-ip^2}- \sum_{i=0}^{\left\lfloor {q+k-p\over p^2}\right\rfloor}(i+1)x^{ip^2}x^{q+k-p-ip^2}     \right)\\
&=\left(-1+\sum_{i=0}^{\left\lfloor {q+k\over p^2}\right\rfloor}(i+1)- \sum_{i=0}^{\left\lfloor {q+k-p\over p^2}\right\rfloor}(i+1)    \right).
\end{align*}

By Lemma \ref{lemma:qpk}, there exists a $k$ so that $0\leq k<p$ and $\lfloor {q+k\over p^2}\rfloor=\lfloor {q+k-p\over p^2}\rfloor$. Thus, for this $k$, the above equals $-1$.

\end{proof}

%\[\sum_{i\geq 0} x^{ip^2}\]

% 7/18 work on q+k together
% Over Weekend:
% Cooper: p>q proof
% Evelyn: q>p proof

\begin{lemma} 
$Q_{02}$ has a negative coefficient.
\end{lemma}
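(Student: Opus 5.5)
The plan is to follow the template of the proof for $Q_{12}$. First I write $Q_{02}$ as a rational function and expand it as a product of formal series, then I locate a low-degree coefficient that is forced to be negative.

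\textbf{Setting up the series.} Using $\phi_q=(1-x^q)/(1-x)$ and $\phi_{p^2q}(x)=\phi_{pq}(x^p)=\frac{(1-x^p)(1-x^{p^2q})}{(1-x^{p^2})(1-x^{pq})}$, we get
\[Q_{02}/x=\frac{(1-x^q)^2(1-x^p)^2(1-x^{p^2q})^2}{(1-x)^2(1-x^{p^2})^2(1-x^{pq})^2}.\]
I then regroup this in whichever of two ways suits the case, both with nonnegative "base" series. The cases are split, as before, by $\max(p,q)$.

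\textbf{Case $p>q$.} Group as $\phi_q^2\cdot(1-x^p)^2\cdot(\text{series in }x^{p^2},x^{pq})$. In degrees below $\min(p^2,pq)=pq$, only $\phi_q^2(1-x^p)^2$ contributes. The coefficients $c_j$ of $\phi_q^2$ form the "tent" $c_j=j+1$ for $j\le q-1$ and $c_j=2q-1-j$ for $q-1\le j\le 2q-2$. Multiplying by $(1-x^p)^2$ takes a second difference with step $p$, so at the peak I expect
\[[x^{q-1+p}](Q_{02}/x)=c_{q-1+p}-2c_{q-1}+c_{q-1-p}.\]
When $p>q-1$, the outer two terms vanish, since $q-1+p>2q-2$ and $q-1-p<0$. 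The coefficient is then $-2q<0$. I also have to check that $q-1+p<pq$, so that the denominator series $1/(1-x^{pq})^2$ and $1/(1-x^{p^2})^2$ do not interfere; this inequality is immediate.

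\textbf{Case $q>p$.} Group instead as
\[Q_{02}/x=(1-x^q)^2\,(1-x^{p^2q})^2\,\Big(\sum_{i\ge0}(i+1)x^{pqi}\Big)\,S(x),\qquad S(x)=\phi_p^2\sum_{i\ge 0}(i+1)x^{p^2i}.\]
Here $S$ has nonnegative coefficients $s_j$. For $0\le k<q$ we have $q+k<\min(2q,pq)$, so
\[[x^{q+k}](Q_{02}/x)=s_{q+k}-2s_k.\]
It therefore suffices to find $k\in[0,q)$ with $s_{q+k}<2s_k$. Since $\phi_p^2$ is supported on $[0,2p-2]$, which has length less than $p^2$, each $s_j$ equals $(i+1)$ times a single coefficient of $\phi_p^2$, where $i=\lfloor j/p^2\rfloor$. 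In particular $s_j=0$ whenever $j\bmod p^2\in[2p-1,p^2-1]$.

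The first thing to try is choosing $k$ with $q+k\equiv 2p-1\pmod{p^2}$ and $k\bmod p^2\le 2p-2$. This forces $s_{q+k}=0<2s_k$. Failing that, I would compare $s_{q+k}$ and $s_k$ for $k<p^2$ directly. Then $s_k$ is just a coefficient of $\phi_p^2$, while $s_{q+k}$ is a coefficient of $\phi_p^2$ multiplied by $\lfloor (q+k)/p^2\rfloor+1$. I would pair this with Lemma \ref{lemma:qpk}, which was already used for $Q_{12}$, to pin down a $k<p$ where the residue of $q+k$ lands in the gap $[2p-1,p^2-1]$.

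\textbf{Main obstacle.} I expect the $q>p$ case to be the hard step, specifically the residue bookkeeping. When $q\bmod p^2$ is large, the clean choice of $k$ above is unavailable, and the multiplier $\lfloor (q+k)/p^2\rfloor+1$ can make $s_{q+k}$ large. If no such $k$ works for some residue classes, my fallback is to examine instead the coefficient at $q+k$ with $k$ in the range $[p^2,2p^2)$. Alternatively, I would use the palindromic symmetry of $Q_{02}$ and look near the top degree, where the roles of the factors are reversed.
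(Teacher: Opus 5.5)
Your case $p>q$ is complete and correct: the coefficient of $x^{p+q-1}$ in $Q_{02}/x$ is $-2c_{q-1}=-2q$. (The paper's own count for this case gets $-p-q$. It misses the contribution from $x^{2q}$, namely $[x^{p-q-1}]\sum_{i\ge 0}(i+1)x^i=p-q$, which is present because $p>q$. So your value is the right one.) Your reduction for $q>p$ is also correct: for $0\le k<q$ you have $[x^{q+k}](Q_{02}/x)=s_{q+k}-2s_k$, and $s_j=0$ whenever $j\bmod p^2\in[2p-1,p^2-1]$.

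The gap is that you stop there. You never prove that a suitable $k$ exists, and you single out large residues of $q$ modulo $p^2$ as the main obstacle. That obstacle does not exist, and none of your fallbacks (Lemma~\ref{lemma:qpk}, $k\in[p^2,2p^2)$, palindromicity) is needed. Let $r$ be the residue of $q$ modulo $p^2$; $r\neq 0$ because $q$ is a prime different from $p$.
\begin{itemize}
\item If $r\ge 2p-1$, then $q$ itself lies in the gap. Taking $k=0$ gives $s_q=0$ and $s_0=1$, so $[x^q](Q_{02}/x)=-2$.
\item If $1\le r\le 2p-2$, take $k=2p-1-r\in[1,2p-2]$. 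Then $q+k\equiv 2p-1\pmod{p^2}$, so $s_{q+k}=0$, while $s_k=[x^k]\phi_p^2>0$ since $k<p^2$. Also $k<q$: if $q<p^2$, then $r=q\ge p+1$ and $k\le p-2$; if $q>p^2$, then $k\le 2p-2<q$.
\end{itemize}
Either way the coefficient is $-2s_k<0$. The multiplier $\lfloor (q+k)/p^2\rfloor+1$ that worried you is irrelevant, since it multiplies zero.

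With this step supplied, your argument is a cleaner version of the paper's. The paper keeps $(1-x^p)^2$ and $\sum_{i\ge0}(i+1)x^i$ separate and writes the coefficient as three floor-indexed sums. It makes them cancel by choosing $k$ with $\lfloor (q+k)/p^2\rfloor=\lfloor (q+k-p)/p^2\rfloor=\lfloor (q+k-2p)/p^2\rfloor$, i.e.\ $(q+k)\bmod p^2\ge 2p$, which is exactly $q+k$ landing in your gap. Because $2p=p^2$ when $p=2$, that three-way equality is unavailable there. The paper therefore needs Corollary~\ref{cor:qpk} for $p>2$ and a separate computation for $p=2$. Folding $(1-x^p)^2$ into $\phi_p^2$ makes the vanishing of $s_{q+k}$ visible at once, treats every $p$ uniformly, and gives the explicit value $-2[x^k]\phi_p^2$.
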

\begin{proof}
After rewriting, we have
$$Q_{02}=x(1-x^q)^2(1-x^p)^2(1-x^{p^2q})^2\left(\sum_{i\geq 0}(i+1)x^i\right)\left(\sum_{i\geq 0}(i+1)x^{p^2i}\right)\left(\sum_{i\geq 0}(i+1)x^{pqi}\right)$$
We look at the cases depending on $\max(p,q)$. If $\max(p,q)=p$, the coefficients come only from the factors $(1-x^q)^2$, $(1-x^p)^2$, and $\displaystyle \sum_{i\geq 0}(i+1)x^{i}$. We claim that in this case, the $(p+q+1)$th coefficient will always be negative,
Using these three factors, there are three ways to build the power $p+q-1$. The first uses $(1-x^q)^2$ and $\displaystyle \sum_{i\geq 0}(i+1)x^{i}$, the second uses $(1-x^p)^2$ and $\displaystyle \sum_{i\geq 0}(i+1)x^{i}$ and the third uses $\displaystyle \sum_{i\geq 0}(i+1)x^{i}$. Taking the coefficients from these, we get $-2p$, $-2q$, and $p+q$, respectively, whose sum is $-p-q$, as desired. 

Otherwise, if $\max(p,q)=q$, we assert that there is an integer $k$, where $0\leq k \leq p$, so that $[x^{q+k}](Q_{02}/x)<0$. We must note that it is possible to have non-zero multiples of $p^2$ smaller than $q$, meaning $\displaystyle\sum_{i\geq 0}(i+1)x^{p^2i}$ is a possible factor. That is,

\begin{align*}
[x^{q+k}](Q_{02}/x) &= [x^{q+k}](1-x^q)^2(1-x^p)^2\left(\sum_{i\geq 0}(i+1)x^i\right) \left(\sum_{i\geq 0}(i+1)x^{p^2i}\right)\\
&=[x^{q+k}]\Bigg(-2x^q(k+1)x^k+\left\lfloor \frac{k}{p}\right\rfloor 4x^px^q+\sum_{i=0}^{\left\lfloor \frac{q+k}{p^2}\right\rfloor}(i+1)x^{ip^2}(q+k-ip^2+1)x^{q+k-ip^2} \\
&+ \sum_{i=0}^{\left\lfloor \frac{q+k-2p}{p^2}\right\rfloor}(i+1)x^{ip^2}x^{2p}(q+k-ip^2-2p+1)x^{q+k-ip^2-2p} \\
&- 2\sum_{i=0}^{\left\lfloor\frac{q+k-p}{p^2}\right\rfloor}(i+1)x^{ip^2}x^p(q+k-ip^2-p+1)x^{q+k-ip^2-p} \Bigg)\\
&=-2(k+1)+4\left\lfloor\frac{k}{p}\right\rfloor+\sum_{i=0}^{\left\lfloor\frac{q+k}{p^2}\right\rfloor}(i+1)(q+k-ip^2+1)+\sum_{i=0}^{\left\lfloor\frac{q+k-2p}{p^2}\right\rfloor}(i+1)(q+k-ip^2-2p+1)\\
&-2\sum_{i=0}^{\left\lfloor\frac{q+k-p}{p^2}\right\rfloor}(i+1)(q+k-ip^2-p+1).
\end{align*}
If $p>2$, using Corollary \ref{cor:qpk}, there exists a $k$ for which $\left\lfloor {q+k-2p\over p^2}\right\rfloor=\left\lfloor {q+k-p\over p^2}\right\rfloor=\left\lfloor {q+k\over p^2}\right\rfloor$, and thus  the above simplifies to
\[ -2(k+1)+4\left\lfloor\frac{k}{p}\right\rfloor\]
which is $-2(k+1)$ for $k<p$ and $-2p+2$ for $k=p$, both of which being negative.

 Suppose instead $p=2$. For this value of $p$, $2p=p^2$, and so $\left\lfloor {q+k\over p^2}\right\rfloor=\left\lfloor {q+k-2p\over p^2}\right\rfloor+1.$ This time, select a $k$ using Lemma \ref{lemma:qpk} so that 
$\left\lfloor{q+k-p\over 4}\right\rfloor=\left\lfloor{q+k\over 4}\right\rfloor$. 
In this case, simplifies to 
\[-2(k+1)+4\left\lfloor{k\over 2}\right\rfloor+p-\left\lfloor{q+k-p\over p^2}\right\rfloor-1=-2(k+1)+4\left\lfloor{k\over 2}\right\rfloor-\left\lfloor{q+k-6\over 4}\right\rfloor\]
which will always be at least $-1$ since $q\geq 3$.

% we can show that all components of the remaining summations will be negative. Since $k$ can at most ever be $p$, then it shows that $-2(k+1)+4\lfloor\frac{k}{p}\rfloor$ will always be negative as well. 
Thus, we may conclude that, regardless of the values for $p$ and $q$, $Q_{02}$ will always have a negative coefficient.

\end{proof}
%%I think it's primes that are 1 more than a multiple of 4? Alternatively, we say p mod 4=1.
%\begin{corollary}
%$Q_{01}$ will always be negative, except when $q\equiv 1\pmod{4}$
%\end{corollary}

The final case is for $P_{21}$ and $Q_{01}$. In this case, $P_{21}$ always has positive coefficients by Lemma \ref{lemma:P_21}, so it suffices to focus on $Q_{01}$. Interestingly, in this case, we have conditional positivity, as described by the following. 

\begin{con}\label{con:q01}
$Q_{01}$ has nonnegative coefficients if and only if $p=2$ and $q\equiv 1 \mod 4$.
\end{con}

We have ample numerical evidence to justify the case where $p\neq 2$, but were not able to fully classify this case. However, we can prove this conjecture in the case of $p=2$.

\begin{theorem}\label{thm:q01_coeffs} Let $p=2$. Then $Q_{01}$ has nonnegative coefficients if and only if $q\equiv 1 \mod 4$.
\end{theorem}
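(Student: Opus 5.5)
The plan is to turn $Q_{01}$ into an explicit rational function whose series coefficients can be written in closed form. With $p=2$ and $q$ odd, Equation \eqref{eq:cyc_prime_power_mult} gives $\phi_{4q}(x)=\phi_{2q}(x^2)$, and Equation \eqref{eq:parity} then gives $\phi_{2q}(x^2)=\phi_q(-x^2)$. Since $q$ is odd, this equals
\[\frac{1+x^{2q}}{1+x^2}.\]
Hence
\[\frac{Q_{01}}{x}=\phi_q^2(x)\,\phi_q(-x^2)=\frac{(1-x^q)^2(1+x^{2q})}{(1-x)^2(1+x^2)}.\]
Because $(1+x)(1-x)(1+x^2)=1-x^4$, we have $\frac{1}{(1-x)(1+x^2)}=\frac{1+x}{1-x^4}$, whose coefficients are periodic: $1,1,0,0,1,1,0,0,\dots$. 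Dividing once more by $1-x$ takes partial sums. So $\frac{1}{(1-x)^2(1+x^2)}=\sum_{n\ge 0}d_nx^n$ with $d_n=1,2,2,2,3,4,4,4,\dots$. Explicitly,
\[d_n=\frac n2+e(n),\qquad e(n)\in\left\{1,\tfrac32,1,\tfrac12\right\}\text{ according as } n\equiv 0,1,2,3 \pmod 4.\]

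Next I expand the numerator: $(1-x^q)^2(1+x^{2q})=1-2x^q+2x^{2q}-2x^{3q}+x^{4q}$. The polynomial $Q_{01}/x$ has degree $4q-4$ and is palindromic, being a product of cyclotomic polynomials with no $\phi_1$ factor. It therefore suffices to examine the coefficients of $x^N$ for $N\le 2q-2$.
\begin{itemize}
\item For $N<q$ the coefficient is $d_N>0$.
\item For $N=q+t$ with $0\le t\le q-2$ the coefficient is
\[c(t)=d_{q+t}-2d_t=\frac{q-t}{2}+e(q+t)-2e(t).\]
\end{itemize}
Since $e(q+t)\ge \tfrac12$ and $2e(t)\le 3$, we get $c(t)\ge \frac{q-t-5}{2}$. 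So $c(t)\ge 0$ automatically when $q-t\ge 5$.

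The main step, and the only place where $q\bmod 4$ enters, is checking the few remaining values $t=q-4,q-3,q-2$ by residues. The endpoint $t=q-2$ decides everything.
\begin{itemize}
\item If $q=4s+3$, then $d_{q-2}=d_{4s+1}=2s+2$ and $d_{2q-2}=d_{8s+4}=4s+3$. The coefficient of $x^{2q-2}$ is therefore $-1$, proving necessity.
\item If $q=4s+1$, the same computation gives $d_{8s}-2d_{4s-1}=(4s+1)-4s=1$.
\end{itemize}
For sufficiency I still need to check $t=q-3$ and $t=q-4$ when $q\equiv1\pmod 4$. At $t=q-3$, with $t\equiv 2$ and $q+t\equiv 3\pmod 4$, we get $c=\tfrac32+\tfrac12-2=0$. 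At $t=q-4$, with $t\equiv 1$ and $q+t\equiv 2\pmod 4$, we get $c=2+1-3=0$. These are nonnegative, which completes the case $q\equiv 1\pmod 4$.

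The main risk is bookkeeping, so I would sanity-check the whole scheme numerically against $q=5$ (all coefficients nonnegative) and $q=3,7$ (a negative coefficient appears). Among the expected negative coefficients, the coefficient of $x^{2q-2}$ in $Q_{01}/x$ (that is, of $x^{2q-1}$ in $Q_{01}$) is the one that must come out to $-1$.
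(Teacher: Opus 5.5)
Your proof is correct, and it takes a genuinely different and considerably cleaner route than the paper. Both arguments start from $Q_{01}/x=\phi_q^2(x)\,\phi_q(-x^2)$. The paper then convolves the explicit coefficients of $\phi_q^2$ with the alternating coefficients of $\phi_q(-x^2)$, which gives each coefficient as a pair of alternating sums with floor/ceiling limits (Proposition~\ref{prop:q01_form}). It handles $0\le k<q$ by an alternating-sum argument. For $q\le k<2q-2$ it reduces to an expression in $\alpha=\lfloor k/2\rfloor$ and $\beta=\lfloor (k-q)/2\rfloor$, whose sign is settled by a four-way parity case analysis (Lemma~\ref{lem:q01_p2_comp}). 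The central coefficient $(-1)^{(q-1)/2}$ is evaluated by a separate alternating-sum computation. You instead keep the rational form and use $(1-x)(1+x)(1+x^2)=1-x^4$ to get the closed form $d_n=n/2+e(n)$, with $e$ periodic modulo $4$. Every coefficient with $q\le N\le 2q-2$ then becomes $\frac{q-t}{2}$ plus a bounded periodic correction. A single estimate disposes of all $t\le q-5$, and only $t=q-4,q-3,q-2$ remain to be checked by residues. The paper's route needs nothing beyond multiplying two explicit polynomials, but pays for it in floor/ceiling bookkeeping. Yours trades that for one small generating-function observation and makes it transparent that $q \bmod 4$ enters only through the period-$4$ correction. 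As a bonus it shows the result is tight: for $q\equiv 1\pmod 4$ the coefficients at $t=q-4$ and $t=q-3$ are exactly $0$ and the central one is $1$.
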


The argument for positivity even in this specific case requires substantive exploration. We start by providing a formulation of $[x^k]Q_{01}/x$ in this case. 

\begin{proposition}
If $p=2$, then

$$[x^k]\frac{Q_{01}}{x}=\sum_{i=\max(0,\lceil (k-q+1)/2\rceil)}^{\lfloor {k/2}\rfloor}(k-2i+1)(-1)^i + \sum_{i=\max(0,\lceil (k+2-2q )/2\rceil)}^{\lfloor(k-q)/2\rfloor}(2q+2i-k-1)(-1)^i$$\label{prop:q01_form}
\end{proposition}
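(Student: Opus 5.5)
The plan is to reduce $Q_{01}/x$ for $p=2$ to a product of two explicit polynomials and then read off the coefficient of $x^k$ as a convolution. By definition $Q_{01}/x=\phi_q^2\phi_{4q}$. By Equation \eqref{eq:cyc_prime_power_mult} with $p=2$, $k=2$ and $m=q$, we have $\phi_{4q}(x)=\phi_{2q}(x^2)$. Since $q$ is an odd prime, Equation \eqref{eq:parity} gives $\phi_{2q}(y)=\phi_q(-y)$. Hence
\[\frac{Q_{01}}{x}=\phi_q(x)^2\,\phi_q(-x^2)=\left(\sum_{j=0}^{2q-2}c_jx^j\right)\left(\sum_{i=0}^{q-1}(-1)^ix^{2i}\right),\]
where the second factor comes from Equation \eqref{eq:cyc_prime}. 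The coefficients $c_j$ of $\phi_q^2$ form the familiar ``tent'': $c_j=j+1$ for $0\le j\le q-1$ and $c_j=2q-1-j$ for $q-1\le j\le 2q-2$. This is the same computation that gave the polynomial form of $A(x)$ in Theorem \ref{theorem:cs2n}, applied with $a=q$.

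Next I expand the product:
\[[x^k]\frac{Q_{01}}{x}=\sum_i(-1)^i c_{k-2i},\]
where the sum runs over those $i$ with $0\le i\le q-1$ and $0\le k-2i\le 2q-2$. I split this index set according to which branch of the tent $c_{k-2i}$ lies on. On the rising branch the conditions are $0\le k-2i\le q-1$, which means $\lceil (k-q+1)/2\rceil\le i\le\lfloor k/2\rfloor$, and the coefficient is $k-2i+1$. On the falling branch the conditions are $q\le k-2i\le 2q-2$, which means $\lceil (k-2q+2)/2\rceil\le i\le \lfloor (k-q)/2\rfloor$, and the coefficient is $2q-1-(k-2i)=2q+2i-k-1$. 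Intersecting each range with $i\ge 0$ produces the $\max(0,\cdot)$ in each lower limit, and this yields exactly the two sums in the statement. The boundary value $j=q-1$ is placed only in the first sum, so it is not double counted.

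The one step needing care, and the main obstacle, is the constraint $i\le q-1$ from the degree of $\phi_q(-x^2)$, which does not appear explicitly in the stated limits. I will check that it is automatic in the relevant range. If $k\le 2q-2$, then $\lfloor k/2\rfloor\le q-1$ and $\lfloor (k-q)/2\rfloor\le q-1$, so neither sum ever reaches an index $i\ge q$. For larger $k$, I would use that $Q_{01}/x$ is palindromic of degree $4q-4$, since each cyclotomic factor is palindromic. This gives $[x^k]=[x^{4q-4-k}]$, so the coefficients with $k\le 2q-2$ determine everything. If instead the formula is intended for all $k$, one would also have to verify that the spurious terms with $i\ge q$ vanish or cancel. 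Failing that, the cleanest route is to state the proposition for $0\le k\le 2q-2$ and invoke the symmetry, which is all that is needed later for deciding nonnegativity in Theorem \ref{thm:q01_coeffs}.
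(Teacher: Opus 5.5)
Your proposal is correct and is essentially the paper's own argument. Both write $Q_{01}/x=\phi_q(x)^2\phi_q(-x^2)$, expand $\phi_q^2$ as the tent, and split the convolution by branch with the same index bounds. Your concern about the dropped constraint $i\le q-1$ is well founded: the paper's proof ignores it as well, and the displayed formula can fail once $k\ge 2q$ (for $q=3$, $k=8$ it gives $-1$ instead of the true coefficient $1$). So restricting to $0\le k\le 2q-2$ and invoking palindromicity is the right reading, and it matches how the paper uses the proposition in the subsequent lemmas.
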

\begin{proof}
By definition, 
$$\frac{Q_{01}}{x} =\phi_q^2\phi_{p^2q}.$$

Letting $p=2$, we have $\frac{Q_{01}}{x} =\phi_q^2\phi_{4q}$, which is the same as $\phi_q^2\phi_{2q}(x^2)$ by Equation \eqref{eq:cyc_prime_power_mult}. By Equation \eqref{eq:parity}, this becomes
$\phi_q^2\phi_q(-x^2)$.
Thus, representing $\phi_q$ as series, we have
\begin{align*}
\frac{Q_{01}}{x}&=\left( \sum_{j=0}^{q-1}x^j\right)^2\left(\sum_{i=0}^{q-1}(-1)^ix^{2i}\right)\\
&=\left(\sum_{j=0}^{q-1}(j+1)x^j + \sum_{j=1}^{q-1}(q-j)x^{q+j-1}\right) \left(\sum_{i=0}^{q-1}(-1)^ix^{2i}\right)\\
&=\sum_{i=0,j=0}^{q-1}(j+1)(-1)^ix^{j+2i} + \sum_{j=1,i=0}^{q-1}(q-j)(-1)^ix^{q+j+2i-1}.
\end{align*}

Thus, we have that 
\begin{align*}
[x^k]\frac{Q_{01}}{x}&=\sum_{i=\max(0,\lceil (k-q+1)/2\rceil)}^{\lfloor {k/2}\rfloor}(k-2i+1)(-1)^i + \sum_{i=\max(0,\lceil (k+2-2q )/2\rceil)}^{\lfloor(k-q)/2\rfloor}(2q+2i-k-1)(-1)^i,
\end{align*}
where the bounds on the first summation are derived from the fact that $0\leq j\leq q-1$ and the substitution $j=k-2i$, giving us that \[{k-q+1\over 2}\leq i\leq {q\over 2},\]
and the bounds on the second one come similarly, except in this case $1\leq j\leq q-1$, and the substitution $j=k-q-2i+1$, giving

\[{k+2-2q \over 2} \leq i\leq {k-q\over 2}.\]
\end{proof}

Using this formula, we can demonstrate the circumstances where, provided $p=2$, we have nonnegative coefficients. Note that cyclotomic polynomials are palindromic and products of palindromic polynomials are again palindromic; thus, given the degree of this polynomial is $4q-4$, we need only demonstrate what we know about the coefficients of the terms up to degree $2q-2$. 

We proceed by grouping the coefficients into three categories: $0\leq k<q$, $q\leq k<2q-2$, and $k=2q-2$. 

\begin{lemma}
For $0\leq k<q$, the coefficients of $Q_{01}$ are all positive. 
\end{lemma}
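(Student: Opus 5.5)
The plan is to read off the coefficient directly from Proposition \ref{prop:q01_form}, since for $0\le k<q$ both summation ranges simplify considerably. Throughout, ``coefficients of $Q_{01}$'' means the coefficients $[x^k]Q_{01}/x$ for $0\le k<q$, which are the coefficients of $Q_{01}$ in degrees $1$ through $q$.

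\textbf{Reducing to a single sum.} For the second sum, the upper limit is $\lfloor (k-q)/2\rfloor$. Since $k<q$, this is negative, while the lower limit is at least $0$. So the second sum is empty. For the first sum, $k\le q-1$ gives $k-q+1\le 0$, so $\lceil (k-q+1)/2\rceil\le 0$ and the lower limit is $0$. Hence, for $0\le k<q$,
\[ [x^k]\frac{Q_{01}}{x}=\sum_{i=0}^{\lfloor k/2\rfloor}(-1)^i(k-2i+1). \]
As a sanity check, this agrees with expanding $\left(\sum_j (j+1)x^j\right)\left(\sum_i (-1)^i x^{2i}\right)$ in degrees below $q$. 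In that range only the first part of $\phi_q^2$ contributes, and the truncation of $\phi_q(-x^2)$ at $i\le q-1$ is irrelevant because $2i\le k<q$.

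\textbf{Showing the sum is positive.} This is an alternating sum whose terms $k+1, k-1, k-3,\dots$ are positive and strictly decreasing, each step dropping by $2$. I will group the terms in consecutive pairs $(i,i+1)$ with $i$ even. Each such pair contributes $(k-2i+1)-(k-2i-1)=2$. If the number of terms $\lfloor k/2\rfloor+1$ is odd, one final term $k-2\lfloor k/2\rfloor+1\in\{1,2\}$ is left unpaired, and it is positive.

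Carrying out the count gives an explicit value. When $\lfloor k/2\rfloor$ is odd, the coefficient is $2\cdot\frac{\lfloor k/2\rfloor+1}{2}=\lfloor k/2\rfloor+1$. When $\lfloor k/2\rfloor$ is even, it is $\lfloor k/2\rfloor + (k-2\lfloor k/2\rfloor+1)$. In every case the coefficient is at least $1$. Alternatively, one can just cite the standard fact that an alternating sum of decreasing positive terms is at least its first term minus its second, here $2$, or equals the first term $k+1\ge 1$ when there is only one term.

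\textbf{Main obstacle.} There is no real obstacle here; the only care needed is in the bookkeeping of the summation limits from Proposition \ref{prop:q01_form}. In particular, I need to confirm that the first sum's lower limit really is $0$, and not something like $\lceil (k-q+1)/2\rceil$ being positive when $k=q-1$: at $k=q-1$ this gives $\lceil 0\rceil=0$, so it is fine. This lemma also does not depend on the hypothesis $q\equiv 1\pmod 4$, which should only matter for the later ranges $q\le k\le 2q-2$.
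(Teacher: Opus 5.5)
Your proof is correct and takes the same route as the paper. Both reduce, via Proposition \ref{prop:q01_form}, to the single alternating sum $\sum_{i=0}^{\lfloor k/2\rfloor}(-1)^i(k-2i+1)$ and conclude positivity from its alternating structure. Your pairing argument (each consecutive pair contributes $2$, and any leftover term is $1$ or $2$) makes explicit the monotonicity of the terms, which the paper's one-line ``largest term is positive'' remark leaves implicit.
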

\begin{proof}
By Proposition \ref{prop:q01_form}, in this case we have 
$$[x^k]\frac{Q_{01}}{x}=\sum_{i=0}^{\lfloor {k/2}\rfloor}(k-2i+1)(-1)^i.$$

Observe this is an alternating sum, where the largest term (when $i=0$) is positive, and so the resulting sum must be positive.

\end{proof}

Before moving on to our next case, we bring to the reader's attention some notation we will use in the remainder of this section, starting with the next result. We let $\delta(\bullet)$ be the \textbf{Kronecker delta} function, which returns $1$ when the input is true and $0$ otherwise.
\begin{lemma}
For $q\leq k<2q-2$, the coefficients of $Q_{01}$ are all nonnegative. 
\end{lemma}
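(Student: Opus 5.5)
The plan is to start from the explicit formula of Proposition \ref{prop:q01_form} with $p=2$ and evaluate both alternating sums in closed form on the range $q\le k<2q-2$. The formula is for $[x^k]Q_{01}/x$, whose coefficients are exactly those of $Q_{01}$ shifted by one degree. In this range the bounds simplify.
\begin{itemize}
\item In the first sum, $\lceil (k-q+1)/2\rceil\ge 1>0$, so it runs over $i_0:=\lceil (k-q+1)/2\rceil\le i\le \lfloor k/2\rfloor$. Writing $j=k-2i$, its terms are $(j+1)(-1)^i$ with $j$ decreasing in steps of $2$, from $q-1$ or $q-2$ down to $0$ or $1$.
\item In the second sum, $k<2q-2$ gives $k+2-2q<0$, so the lower bound is $0$. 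It runs over $0\le i\le i_1:=\lfloor (k-q)/2\rfloor$, with terms $(2q-k-1+2i)(-1)^i$, whose absolute values \emph{increase} in steps of $2$.
\end{itemize}
So both pieces are alternating sums of arithmetic progressions with common difference $2$, and such sums have a simple closed form.

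\textbf{Closed form.} Group consecutive terms in pairs. Each full pair contributes $\pm 2$, and there is at most one unpaired term at an end.
\begin{itemize}
\item In the first sum, pair from the small end, starting at $i=\lfloor k/2\rfloor$, where the term is $1$ or $2$. The pairs then telescope to a quantity of size about $(q-1)/2$, signed by $(-1)^{i_0}$.
\item In the second sum, pair from $i=0$. The sum is then $(-1)$ times the number of pairs, plus, if the number of terms is odd, the final unpaired term $(-1)^{i_1}(2q-k-1+2i_1)$, which has size about $q$.
\end{itemize}
Everything is determined by the parity of $k$ and by $q \bmod 4$, since $i_0$ and $i_1$ depend on $\lceil (k-q+1)/2\rceil$ and $\lfloor (k-q)/2\rfloor$. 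I will therefore split into the four cases ($k$ even/odd)$\times$($q\equiv 1,3 \bmod 4$) and record each closed form using the Kronecker delta notation just introduced. Because $q$ is odd, $i_1$ equals $i_0$ or $i_0-1$ according to the parity of $k$, which ties the two signs $(-1)^{i_0}$ and $(-1)^{i_1}$ together.

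\textbf{Main obstacle.} The hard step is the sign bookkeeping. The largest term of the first sum (size $\approx q$, at $i=i_0$) and the largest term of the second sum (size $\approx q$, at $i=i_1$) sit at adjacent or equal indices. I expect them to carry opposite signs whenever either is negative, so that they cancel to something of order $1$. The remaining contributions are then a bounded number of $\pm 1,\pm2$ terms together with a term linear in $(k-q)$, whose sign must be checked. I will first verify the four closed forms numerically for $q=5,7$ to fix the signs. I will then show each is a nonnegative expression such as $\lfloor (k-q)/2\rfloor$-type quantities plus a constant. The point to check most carefully is the top of the range, $k=2q-3$. There the constant must still be $\ge 0$, while the analogous computation at $k=2q-2$, treated in the next lemma, is where the dependence on $q \bmod 4$ is expected to cause the failure.
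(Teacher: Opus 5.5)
Your strategy is the paper's: start from Proposition \ref{prop:q01_form}, put the two alternating sums in closed form, and finish with a parity case analysis. The paper splits on the parities of $\alpha=\lfloor k/2\rfloor$ and $\beta=\lfloor (k-q)/2\rfloor$, arriving at the expression of Lemma \ref{lem:q01_p2_comp}. The approach does go through, but three pieces of your bookkeeping are wrong, and as written they would derail the four-case plan.

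\textbf{The index relation.} For \emph{every} $k$ one has $i_0=\lceil (k-q+1)/2\rceil=\lfloor (k-q)/2\rfloor+1=i_1+1$; this is the identity $\lfloor x/2\rfloor=\lceil (x+1)/2\rceil-1$ the paper uses. It is not ``$i_0$ or $i_0-1$ according to the parity of $k$.'' This fixed relation is precisely why the two largest terms always have opposite signs.

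\textbf{The second sum.} Each pair contributes $-2$, not $-1$. So $S_2=2q-k-1+i_1$ when $i_1$ is even, and $S_2=-(i_1+1)$ when $i_1$ is odd.

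\textbf{The case split.} The result is not determined by $k\bmod 2$ and $q\bmod 4$. The sign $(-1)^{i_1}$ depends on $(k-q)\bmod 4$, so you must also split on the parity of $i_1$ (the paper's $\beta$). The first sum is $S_1=(-1)^{i_0}\frac{q+1}{2}$ for even $k$ and $S_1=(-1)^{i_0}\bigl(\frac{q-1}{2}+\delta(q\equiv 3\bmod 4)\bigr)$ for odd $k$. Combining, you get
\[
[x^k]\frac{Q_{01}}{x}=\begin{cases}
q-\frac{k}{2}-2, & k \text{ even},\ i_1 \text{ even},\\
q-\frac{k}{2}, & k \text{ even},\ i_1 \text{ odd},\\
\frac{2q-k-1}{2}-\delta(q\equiv 3\bmod 4), & k \text{ odd},\ i_1 \text{ even},\\
\frac{2q-k-3}{2}+\delta(q\equiv 3\bmod 4), & k \text{ odd},\ i_1 \text{ odd}.
\end{cases}
\]
Each case is nonnegative for $q\le k\le 2q-3$, since even $k$ are at most $2q-4$. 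In particular $[x^{2q-3}]Q_{01}/x=0$ for every $q$, so your instinct that $k=2q-3$ is the tight point is right. Note also that $q\bmod 4$ enters only through $S_1$ for odd $k$.
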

\begin{proof}
In this case, using Proposition \ref{prop:q01_form}, we get 
\begin{align*}
[x^k]\frac{Q_{01}}{x}&=\sum_{i=\lceil (k-q+1)/2\rceil}^{\lfloor {k/2}\rfloor}(k-2i+1)(-1)^i + \sum_{i=0}^{\lfloor(k-q)/2\rfloor}(2q+2i-k-1)(-1)^i\\
&=\sum_{i=0}^{\lfloor {k/2}\rfloor}(k-2i+1)(-1)^i -\sum_{i=0}^{
\lceil (k-q-1)/2\rceil}(k-2i+1)(-1)^i + \sum_{i=0}^{\lfloor(k-q)/2\rfloor}(2q+2i-k-1)(-1)^i\\
&=\sum_{i=0}^{\lfloor {k/2}\rfloor}(k-2i+1)(-1)^i + 2\sum_{i=0}^{\lfloor(k-q)/2\rfloor}(q-k+2i-1)(-1)^i,\\
&=\sum_{i=0}^{\lfloor {k/2}\rfloor}k(-1)^i-\sum_{i=0}^{\lfloor {k/2}\rfloor}(2i-1)(-1)^i + 2\sum_{i=0}^{\lfloor(k-q)/2\rfloor}(q-k)(-1)^i+ 2\sum_{i=0}^{\lfloor(k-q)/2\rfloor}(2i-1)(-1)^i,\\
&=\sum_{i=0}^{\lfloor {k/2}\rfloor}k(-1)^i-\sum_{i=0}^{\lfloor {k/2}\rfloor-1}(2i+1)(-1)^i + 2\sum_{i=0}^{\lfloor(k-q)/2\rfloor}(q-k)(-1)^i+ 2\sum_{i=0}^{\lfloor(k-q)/2\rfloor-1}(2i+1)(-1)^i-1,
\end{align*}
where the third-to-last equality follows from Proposition \ref{prop:floor2ceil}.

This formulation can be simplified significantly. To do so, we let $\delta(P)$ be the indicator function of a statement $P$ which equals $1$ when $P$ is true and $0$ otherwise. Further, we let $\alpha=\lfloor {k\over 2}\rfloor$ and $\beta=\lfloor{k-q\over 2}\rfloor$. Then the above, after applying Proposition \ref{prop:alt_odd} and simplification, yields 
$$
\delta(\alpha \text{ even}) k -(-1)^{\alpha}\alpha +\delta(\beta \text{ even})(2q-2k)+(-1)^\beta 2\beta -1
.$$

We claim this expression is always nonnegative, which we have elected to offload to Lemma \ref{lem:q01_p2_comp} in the Appendix, as the details are case-based computations.

\end{proof}

%\george{I recommend we make the following about the $2k-2$ coefficient. Something like ``this coefficient is nonnegative if and only if $q$ is congruent to 3" or something.}

\begin{lemma}
For $p=2$, we have
$$[x^{2q-2}] Q_{01} = (-1)^{{q-1}\over2}. $$
In particular, $[x^{2q-2}] Q_{01}$ has a negative coefficient if and only if $q\equiv 3\mod 4$.
\end{lemma}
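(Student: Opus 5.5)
We apply Proposition \ref{prop:q01_form} with $k=2q-2$ and evaluate both sums in closed form; the statement should be read as concerning $[x^{2q-2}]\frac{Q_{01}}{x}$, the central coefficient of this palindromic polynomial of degree $4q-4$. Since $p=2$ and $q\neq p$, the prime $q$ is odd, which will be used to pin down parities and floors.

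\textbf{First sum.} Its lower limit is $\max(0,\lceil (q-1)/2\rceil)=(q-1)/2$ and its upper limit is $\lfloor (2q-2)/2\rfloor=q-1$, with summand $(2q-1-2i)(-1)^i$. Reindex by $j=q-1-i$, so $j$ runs from $0$ to $(q-1)/2$. The summand becomes $(2j+1)(-1)^{q-1-j}=(2j+1)(-1)^j$, because $q-1$ is even.

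\textbf{Second sum.} Its lower limit is $\max(0,\lceil 0/2\rceil)=0$ and its upper limit is $\lfloor (q-2)/2\rfloor=(q-3)/2$. The summand is $(2q+2i-(2q-2)-1)(-1)^i=(2i+1)(-1)^i$.

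\textbf{Combining.} Both sums now consist of the same terms $(2i+1)(-1)^i$ over $0\le i\le (q-3)/2$, and the first sum has one extra top term $j=(q-1)/2$. Hence
\[
[x^{2q-2}]\frac{Q_{01}}{x}=2\sum_{i=0}^{(q-3)/2}(2i+1)(-1)^i+q(-1)^{(q-1)/2}.
\]

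\textbf{Alternating odd sum.} We need the identity $\sum_{i=0}^{n-1}(2i+1)(-1)^i=(-1)^{n-1}n$. This follows by pairing consecutive terms, or by induction, and should be exactly Proposition \ref{prop:alt_odd}. Taking $n=(q-1)/2$, the sum equals $-(-1)^{(q-1)/2}\frac{q-1}{2}$. The total is therefore
\[
(-1)^{(q-1)/2}\bigl(q-(q-1)\bigr)=(-1)^{(q-1)/2}.
\]
This is $-1$ exactly when $(q-1)/2$ is odd, that is, when $q\equiv 3\mod 4$, which gives the ``in particular'' clause.

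\textbf{Main obstacle.} The only delicate step is getting the summation limits right, namely the ceilings and floors at $k=2q-2$. Everything there hinges on $q$ being odd, and the reindexing $j=q-1-i$ must preserve the sign, which again uses that $q-1$ is even. Once the limits are fixed, the rest is the telescoping alternating sum.
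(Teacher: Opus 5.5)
Your proof is correct and follows the paper's route: both evaluate Proposition \ref{prop:q01_form} at $k=2q-2$, reading the statement (as the paper's proof does) as a claim about $[x^{2q-2}]\frac{Q_{01}}{x}$, and both finish with Proposition \ref{prop:alt_odd}. The only difference is bookkeeping. Your reflection $j=q-1-i$ turns the first sum directly into a partial sum of $(2i+1)(-1)^i$. This avoids the paper's full-sum-minus-partial-sum manipulation and its case split on the parity of $(q-1)/2$.
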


\begin{proof}

By Proposition \ref{prop:q01_form}

\begin{align*}
[x^{2q-2}]\frac{Q_{01}}{x}&=\sum_{i=\lceil (q-1)/2\rceil}^{q-1}(2q-2i-1)(-1)^i + \sum_{i=0}^{\lfloor(q-2)/2\rfloor}(2i+1)(-1)^i\\
&=\sum_{i=0}^{q-1}(2q-2i-1)(-1)^i -\sum_{i=0}^{\lfloor (q-2)/2\rfloor}(2q-2i-1)(-1)^i + \sum_{i=0}^{\lfloor(q-2)/2\rfloor}(2i+1)(-1)^i\\
&=\sum_{i=0}^{q-1}(2q-2i-1)(-1)^i  + \sum_{i=0}^{\lfloor(q-2)/2\rfloor}(4i+2-2q)(-1)^i\\
&=\sum_{i=0}^{q-1}2q(-1)^i-\sum_{i=0}^{q-1}(2i+1)(-1)^i  + 2\sum_{i=0}^{\lfloor(q-2)/2\rfloor}(2i+1)(-1)^i-\sum_{i=0}^{\lfloor(q-2)/2\rfloor}2q(-1)^i\\
&=2q-(-1)^{q-1}q  + 2(-1)^{\lfloor(q-2)/2\rfloor}\lfloor q/2\rfloor-\sum_{i=0}^{\lfloor(q-2)/2\rfloor}2q(-1)^i\\
&=q + (-1)^{\lfloor(q-2)/2\rfloor}(q-1)-\sum_{i=0}^{\lfloor(q-2)/2\rfloor}2q(-1)^i\\
\end{align*}

Since $q\not=p=2$, $q$ is odd. Therefore, $q=2k+1$ for $k\in \mathbb{N}$. Observe $\lfloor\frac{q-2}{2}\rfloor$. Then $\lfloor\frac{q-2}{2}\rfloor= \lfloor\frac{2k-1}{2}\rfloor=\lfloor k-\frac{1}{2}\rfloor$. Looking back at $q=2k+1$, then $k=\frac{q-1}{2}$. Since $q$ has odd parity, $k$ must be an integer. So, $\lfloor k-\frac{1}{2}\rfloor= k+\lfloor-\frac{1}{2}\rfloor= k-1$. Thus, $\lfloor\frac{q-2}{2}\rfloor=k-1$.
\\

\begin{align*}
[x^{2q-2}]\frac{Q_{01}}{x}&=q + (-1)^{\lfloor(q-2)/2\rfloor}(q-1)-\sum_{i=0}^{\lfloor(q-2)/2\rfloor}2q(-1)^i\\
&=q+(-1)^{k-1}(q-1)-\sum_{i=0}^{k-1}2q(-1)^i\\
&=q-(-1)^k(q-1)-\sum_{i=0}^{k}2q(-1)^i+2q(-1)^{k}\\
&=q-(-1)^k(q-1)+(-1)^k(2q)-\sum_{i=0}^{k}2q(-1)^i\\
&=q+(-1)^k(q+1)-\sum_{i=0}^{k}2q(-1)^i
\end{align*}

Case 1: $k$ is even
\begin{align*}
[x^{2q-2}]\frac{Q_{01}}{x}&=q+(-1)^k(q+1)-\sum_{i=0}^{k}2q(-1)^i\\
&=2q+1-\sum_{i=0}^{k}2q(-1)^i
\end{align*}
since $k$ is even, the sum has odd terms, thus the sum becomes $-2q$. So,
$$[x^{2q-2}]\frac{Q_{01}}{x}=-2q+1-(-2q)=1$$

Case 2: $k$ is odd
\begin{align*}
[x^{2q-2}]\frac{Q_{01}}{x}&=q+(-1)^k(q+1)-\sum_{i=0}^{k}2q(-1)^i\\
&=-1-\sum_{i=0}^{k}2q(-1)^i
\end{align*}
since $k$ is even, the sum has even terms, thus the sum becomes $0$. So,
$$[x^{2q-2}]\frac{Q_{01}}{x}=-1$$

Therefore, $[x^{2q-2}]\frac{Q_{01}}{x}=(-1)^k=(-1)^{q-1\over 2}$

%$q\equiv 3\pmod{4}$ iff $[x^{2q-2}](\frac{Q_{01}}{x})=-1$ %can also be less than zero, then show it's -1

%We propose that there's an equation that models the sum as follows: 
%$$\sum_{k=0}^n(2k+1)(-1)^k = (-1)^n(n+1)$$
%We will let $\displaystyle S_n=\sum_{k=0}^n(2k-1)(-1)^k$.
%We will continue with induction. \\This holds for the base case when $n=0$
%$$(2(0)+1)(-1)^0=1 \implies (-1)^0(0+1)=1$$
%Assume $\exists m\in\mathbb{N}$ such that $(S_m)=(-1)^m(m+1)$. Observe the $(S_{m+1})$ case 

%\evelyn{TODO: finish Lemma}
%x^{2q-2}of Q_{01} = (-1)^{{q-1}\over2} show the modulo equations get an even or odd power
\end{proof}

At the start of this section, we mentioned that there were multiple ways of constructing $(2,p^2q)$-solutions. Throughout this section, we only focused on solutions where one dice had $p^4$ sides and the other had $q^2$ sides. We pose now the following. 

\begin{problem}\label{prob:open}
Can one find a pair of $p^2q^2$- and $p^2$-sided dice which are $(2,p^2q)$-solution?
\end{problem}

What is interesting about this case is that there are two pairs of polynomials to explore: either
\[x{{\phi}_{p}^2{\phi}_{pq}^a{\phi}_{p^2q}^b \text{ and }x\phi}_{p^2}^2{\phi}_{q}^2{\phi}_{pq}^a{\phi}_{p^2q}^b,\]
or
\[x{\phi}_{p^2}^2{\phi}_{pq}^a{\phi}_{p^2q}^b \text{ and }x{\phi}_{p}^2{\phi}_{q}^2{\phi}_{pq}^a{\phi}_{p^2q}^b.\]

Together, the pairs offers $18$ more cases that one could explore. 
%\pagebreak

%\pagebreak

\section{Square-sided dice for a pair of $pqr$-sided dice}\label{sec:pqr}
In this section, we provide preliminary exploration of $(2,pqr)$-solutions. In this case, the corresponding generating function for the frequency sum is
$$x^2{(1-x^{pqr})^2\over (1-x)^2}=x^2\phi_p^2\phi_q^2\phi_r^2\phi_{pq}^2\phi_{pr}^2\phi_{qr}^2\phi_{pqr}^2.$$
We propose that we can create two dice, a $(pq)^2$-sided and a $r^2$-sided dice, which are $(2,pqr)$-solutions. 

We start by defining our generating functions in this case.

\begin{definition}
%% Should we change a and b in Q to alpha and beta?
For $a,b,c,d\in \{0,1,2\}$, we let
$$A_{a,b,c,d}(x)=x\phi_p^2\phi_q^2\phi_{pq}^a\phi_{pr}^b\phi_{qr}^c\phi_{pqr}^d$$
and
$$B_{a,b,c,d}(x)=x\phi_r^2\phi_{pq}^a\phi_{pr}^b\phi_{qr}^c \phi_{pqr}^d.$$

\end{definition}
\noindent As before, we more often omit the commas from the notation.
Note that \[A_{abcd}(1)\cdot B_{abcd}(1)=p^2q^2\cdot r^2=(pqr)^2,\] thus, the pair $A_{abcd}$ and $B_{2-a,2-b,2-c,2-d}$ would be for generating functions $(2,pqr)$-solutions, provided there coefficients are positive. 

%We introduce variables $\alpha, \beta,\gamma,\delta$ to be used for $B$ in place of $a,b,c,d$,  will be remaining out of the $2$ values; i.e $\alpha=2-a$, $\beta=2-b$, and so on so forth.
%$$A_{abcd}(x)=x\phi_p^2\phi_q^2\phi_{pq}^a\phi_{pr}^b\phi_{qr}^c\phi_{pqr}^d$$
%$$B_{\alpha\beta\gamma\delta}(x)=x\phi_r^2\phi_{pq}^\alpha\phi_{pr}^\beta\phi_{qr}^\gamma \phi_{pqr}^\delta$$

%Using Proposition \ref{prop:expanded_cyclotomic} we can rewrite and simplify these functions to:
%$$A(x)=\left(\frac{1-x^p}{1-x}\right)^2\left(\frac{1-x^q}{1-x}\right)^2 \left(\frac{(1-x^{pq})(1-x)}{(1-x^p)(1-x^q)}\right)^a \left(\frac{(1-x^{pr})(1-x)}{(1-x^p)(1-x^r)}\right)^b\left(\frac{(1-x^{qr})(1-x)}{(1-x^q)(1-x^r)}\right)^c$$
%$$A(x)=\frac{(1-x^{pqr})(1-x^p)(1-x^q)(1-x^r)}{(1-x^{pq})(1-x^{pr})(1-x^{qr})(1-x)}$$
%$$A(x)=(1-x^p)^{2-a-b+d}(1-x^q)^{2-a-c+d}(1-x^{pq})^{a-d}(1-x^{pr})^{b-d} (1-x^{qr})^{c-d}(1-x)^{-4+a+b+c-d}$$

Since there are three values for each of $a$, $b$, $c$, and $d$, there are $81$ possible pairs to study. Thus, we offer in this section only an early, preliminary study, which we hope the reader is curious enough to expand on. 
One fruitful thing here is that we can shorten this by coming up with necessary conditions for having negative coefficients. This can be done by using \cite[Lemma 4.1]{firstpaper}, which we recall here.
\begin{lemma}[\cite{firstpaper}, Lemma 4.1]\label{lem:shriram}
Let $F(x)$ be a function of the form 
\[F(x)=\prod_{i=1}^k (1-x^{n_i})^{\epsilon_i}\]
where the $n_i$ are distinct positive integers and $\epsilon_i\in \mathbb{Z}$. If there exists a $j$ for which $n_j=1$ and $\epsilon_j>0$, then $[x]F<0$.
\end{lemma}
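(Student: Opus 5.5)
We need to prove Lemma 4.1: if $F(x)=\prod_{i}(1-x^{n_i})^{\epsilon_i}$ with distinct $n_i$ and some $n_j=1$ with $\epsilon_j>0$, then $[x]F<0$.

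The approach is a direct first-order expansion. Since every factor $(1-x^{n_i})^{\epsilon_i}$ has constant term $1$, it is a well-defined formal power series even when $\epsilon_i<0$. So $F$ is a formal power series, and we only need its terms through degree $1$. Working modulo $x^2$, each factor with $n_i\geq 2$ is congruent to $1$, because $(1-x^{n_i})^{\epsilon_i}=1+O(x^{n_i})$ for any integer $\epsilon_i$. This holds for negative exponents too, since $(1-y)^{-e}=\sum_{k\ge0}\binom{e+k-1}{k}y^k$ with $y=x^{n_i}$.

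Because the $n_i$ are distinct, exactly one index has $n_i=1$, namely $i=j$. Hence
\[F(x)\equiv (1-x)^{\epsilon_j}\pmod{x^2}.\]
By the binomial theorem, $(1-x)^{\epsilon_j}=1-\epsilon_j x+O(x^2)$ for $\epsilon_j>0$. Therefore $[x]F=-\epsilon_j<0$.

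The only point needing care is justifying that the product of formal series can be truncated factor by factor. This is routine: the product of series each congruent to $1$ modulo $x^2$ is again congruent to $1$ modulo $x^2$, and multiplying by $1-\epsilon_j x$ gives the claim. Distinctness of the $n_i$ is what rules out a second $(1-x)$ factor with negative exponent that could cancel the linear term. This is the one hypothesis to emphasize, and there is no real obstacle beyond it.
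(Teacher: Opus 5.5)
Your argument is correct: modulo $x^2$, every factor with $n_i\ge 2$ is congruent to $1$ as a formal series with constant term $1$, including when $\epsilon_i<0$, and distinctness leaves $(1-x)^{\epsilon_j}$ as the only factor contributing a linear term, so in fact $[x]F=-\epsilon_j<0$. The paper gives no proof of this lemma (it only recalls it from the earlier work of Chao, Gabel, Larson, and Nasr), and your first-order expansion is the natural argument for it.
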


Using \cite[Corollary 5.1]{firstpaper} as a guide, and the cyclotomic polynomials for $A_{abcd}$ and $B_{abcd}$ in rational form as in Equations \eqref{eq:cyc_prime} and \eqref{eq:pqr}, one can deduce the following.
\begin{cor}\label{cor:old}
$$A_{abcd}\text{ has a negative coefficient when } a+b+c>4+d$$
and 
$$B_{abcd}\text{ has a negative coefficient when } a+b+c>2+d.$$
\end{cor}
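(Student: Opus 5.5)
The plan is to put $A_{abcd}/x$ and $B_{abcd}/x$ in the product form required by Lemma \ref{lem:shriram} and then read off the exponent of the factor $(1-x)$. As noted in Section \ref{sec:cyc}, every factor $x^i-1$ may be replaced by $1-x^i$ without changing anything, since the signs cancel in each ratio. I will therefore use the rational forms
\[\phi_s=\frac{1-x^s}{1-x}\quad (s\in\{p,q,r\}),\qquad \phi_{st}=\frac{(1-x)(1-x^{st})}{(1-x^s)(1-x^t)}\quad (s\neq t \text{ in } \{p,q,r\}),\]
together with Equation \eqref{eq:pqr},
\[\phi_{pqr}=\frac{(1-x^p)(1-x^q)(1-x^r)(1-x^{pqr})}{(1-x)(1-x^{pq})(1-x^{pr})(1-x^{qr})}.\]
The formula for $\phi_{st}$ is the case $k=1$ of the identity for $\phi_{p^kq}$.

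Substituting these into $A_{abcd}/x=\phi_p^2\phi_q^2\phi_{pq}^a\phi_{pr}^b\phi_{qr}^c\phi_{pqr}^d$ and collecting equal factors gives an expression
\[\prod_{n\in\{1,p,q,r,pq,pr,qr,pqr\}}(1-x^n)^{\epsilon_n}.\]
The eight values $n$ are distinct positive integers because $p,q,r$ are distinct primes, and only $n=1$ equals $1$. The only quantity that matters is $\epsilon_1$, and I will compute it directly:
\begin{itemize}
\item $\phi_p^2\phi_q^2$ contributes $-4$;
\item each of $\phi_{pq},\phi_{pr},\phi_{qr}$ contributes $+1$ per power, giving $a+b+c$ in total;
\item $\phi_{pqr}^d$ contributes $-d$.
\end{itemize}
Hence $\epsilon_1=a+b+c-d-4$.

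The same bookkeeping for $B_{abcd}/x=\phi_r^2\phi_{pq}^a\phi_{pr}^b\phi_{qr}^c\phi_{pqr}^d$ gives $\epsilon_1=a+b+c-d-2$, since now only $\phi_r^2$ contributes $-2$. The remaining exponents $\epsilon_n$ for $n>1$ can be arbitrary integers, and Lemma \ref{lem:shriram} allows this.

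If $a+b+c>4+d$, then $\epsilon_1>0$ for $A_{abcd}/x$. Lemma \ref{lem:shriram} then gives $[x]\bigl(A_{abcd}/x\bigr)<0$, which means $[x^2]A_{abcd}<0$. Likewise, if $a+b+c>2+d$, then $[x^2]B_{abcd}<0$.

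The only point needing care is that after collecting factors the form really matches the hypothesis of Lemma \ref{lem:shriram}: each $n$ appears exactly once with a single integer exponent, and the factor $(1-x)$ is not hidden inside another term. Since all the $n$ are distinct, this is immediate. The argument is otherwise a routine exponent count, so I expect no real obstacle.
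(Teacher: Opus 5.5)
Your proof is correct and follows the same route as the paper. You convert each cyclotomic factor to rational form, count the net power of $(1-x)$ ($a+b+c-4-d$ for $A_{abcd}$, $a+b+c-2-d$ for $B_{abcd}$), and apply Lemma \ref{lem:shriram}. Your version is also slightly more explicit than the paper's, since you check that the collected values $n$ are distinct and note that the negative coefficient is the one on $x^2$.
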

\begin{proof}
We prove the inequality for $A_{abcd}$; the proof for $B_{abcd}$ is similar. Note that for $A_{abcd}$, when writing its cyclotomic polynomials in rational form, the only ones with a multiple of $(1-x)$ on the denominator are $\phi_p$, $\phi_q$, and $\phi_{pqr}$; thus, a total of $4+d$ multiples of $(1-x)$ are in the denominator for $A_{abcd}$. The remaining cycltomic polynomials, of which there are $a+b+c$, each have a multiple of $(1-x)$ on the numerator. Thus in total there are $a+b+c-(4+d)$ multiples of $(1-x)$ on the numerator of $A_{abcd}$ when simplified.
\end{proof}

One can use various methods (brute force, stars-and-bars/balls-and-bins inspired counting strategies, or computational tools) to verify the following.
\begin{proposition}
There are $36$ cases in which Corollary \ref{cor:old} produces a pair of polynomials, $A_{abcd}$ and $B_{2-a,2-b,2-c,2-d}$, with at least one having a negative coefficient.
\end{proposition}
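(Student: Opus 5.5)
The plan is to turn the statement into a counting problem over the $81$ tuples $(a,b,c,d)\in\{0,1,2\}^4$. Corollary \ref{cor:old} flags a pair in exactly two ways: either $A_{abcd}$ meets the condition $a+b+c>4+d$, or $B_{2-a,2-b,2-c,2-d}$ meets its condition. Substituting the complementary exponents into the inequality for $B$ gives
\[(2-a)+(2-b)+(2-c)>2+(2-d),\]
which rearranges to $a+b+c<2+d$. So, writing $s=a+b+c$, I will count the tuples satisfying $s>4+d$ or $s<2+d$.

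First I will check that these two conditions are disjoint. If both held, we would have $4+d<s<2+d$, which is impossible. Hence the total is the sum of the two separate counts, and no inclusion--exclusion is needed.

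Next I need, for each $s\in\{0,\dots,6\}$, the number of triples $(a,b,c)\in\{0,1,2\}^3$ with $a+b+c=s$. These are the coefficients of $(1+x+x^2)^3$, namely
\[1,\;3,\;6,\;7,\;6,\;3,\;1.\]
I will verify them quickly with stars-and-bars, using inclusion--exclusion for the cap at $2$, or by direct expansion.

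Then I sum over $d$.
\begin{itemize}
\item For $s>4+d$: when $d=0$ we need $s\geq 5$, giving $3+1=4$ tuples; when $d=1$ we need $s=6$, giving $1$; when $d=2$ there are none. This totals $5$.
\item For $s<2+d$: when $d=0$ we need $s\leq1$, giving $1+3=4$; when $d=1$ we need $s\leq 2$, giving $1+3+6=10$; when $d=2$ we need $s\leq3$, giving $1+3+6+7=17$. This totals $31$.
\end{itemize}
Adding the two counts gives $5+31=36$, as claimed.

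The only real obstacle is bookkeeping: applying the $B$-inequality to the complementary exponents $(2-a,2-b,2-c,2-d)$ rather than to $(a,b,c,d)$, and confirming disjointness so that there is no double counting. As a sanity check, I could also confirm the count by a short computer enumeration of all $81$ tuples.
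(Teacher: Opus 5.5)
Your proof is correct and matches what the paper intends: the paper only asserts that the count of $36$ can be checked by brute force, stars-and-bars counting, or computation, and you carry out that count explicitly. You rewrite the $B$-condition on the complementary exponents as $a+b+c<2+d$, observe that it is disjoint from $a+b+c>4+d$, and sum the coefficients $1,3,6,7,6,3,1$ of $(1+x+x^2)^3$ to obtain $5+31=36$.
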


%While one can use standard balls-and-bins (also known as stars-and-bars) counting techniques to count many cases, there will be exclusionary cases that make this tedious to do by hand. 

 %do we need to show this?
 
 As for the remaining $45$ cases, we believe Table \ref{tab:pos2} represents the only possible cases where both polynomials yield positive coefficients, with the last row having a condition as we saw with $Q_{01}$ in the last section. 
 
\begin{table}[h]
\begin{center}
\begin{tabular}{|ccccccccc|c|}\hline
$a$ & $b$ & $c$ & $d$ & $\leftrightarrow$ & $\alpha$ & $\beta$ & $\gamma$ & $\delta$ & condition \\
\hline
$2$ & $0$ & $0$ & $0$ & $\leftrightarrow$ & $0$ & $2$ & $2$ & $2$ & \\
$2$ & $1$ & $0$ & $1$ & $\leftrightarrow$ & $0$ & $1$ & $2$ & $1$ & \\
$2$ & $0$ & $1$ & $1$ & $\leftrightarrow$ & $0$ & $2$ & $1$ & $1$ & \\
$2$ & $1$ & $1$ & $1$ & $\leftrightarrow$ & $0$ & $1$ & $1$ & $1$ & \\
$2$ & $1$ & $1$ & $2$ & $\leftrightarrow$ & $0$ & $1$ & $1$ & $0$ & \\
$2$ & $2$ & $0$ & $2$ & $\leftrightarrow$ & $0$ & $0$ & $2$ & $0$ & \\
$2$ & $2$ & $1$ & $2$ & $\leftrightarrow$ & $0$ & $0$ & $1$ & $0$ & \\
$2$ & $1$ & $1$ & $2$ & $\leftrightarrow$ & $0$ & $1$ & $1$ & $0$ & \\
$2$ & $0$ & $2$ & $2$ & $\leftrightarrow$ & $0$ & $2$ & $0$ & $0$ & \\
$2$ & $1$ & $2$ & $2$ & $\leftrightarrow$ & $0$ & $1$ & $0$ & $0$ & \\
$2$ & $2$ & $2$ & $2$ & $\leftrightarrow$ & $0$ & $0$ & $0$ & $0$ & \\
$1$ & $1$ & $1$ & $1$ & $\leftrightarrow$ & $1$ & $1$ & $1$ & $1$ & $\max(p,q,r)=r, \min(p,q,r)=2$ \\

\hline
\end{tabular}
\end{center}
\caption{The cases where both $A_{abcd}$ and its pair $B_{\alpha\beta\gamma\delta}$ have positive coefficients.}\label{tab:pos2}
\end{table}

\begin{theorem}\label{thm:pqr}
The entries of Table \ref{tab:pos2} correspond to pairs $A_{abcd}$ and $B_{2-a,2-b,2-c,2-d}$ where both have positive coefficients.
\end{theorem}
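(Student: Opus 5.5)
The plan is to treat each row of Table \ref{tab:pos2} separately, in the same way as the proof of Theorem \ref{thm:positive}. For each row I will rewrite $A_{abcd}$ and $B_{2-a,2-b,2-c,2-d}$ as $x$ times a product of polynomials of the form $\phi_s(x^k)$ with $s\in\{p,q,r\}$, or of the form $(1-x^{N})/(1-x^{M})$ with $M\mid N$. Every such factor visibly has nonnegative coefficients, by Equation \eqref{eq:cyc_prime} and the geometric series, so the product does as well.

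The conversions rest on a small set of tools:
\begin{itemize}
\item pairings from Equation \eqref{eq:divisor}: $\phi_p\phi_{pq}=\phi_p(x^q)$, $\phi_q\phi_{pq}=\phi_q(x^p)$, $\phi_r\phi_{pr}=\phi_r(x^p)$, $\phi_r\phi_{qr}=\phi_r(x^q)$, $\phi_{pr}\phi_{pqr}=\phi_{pr}(x^q)$, $\phi_{qr}\phi_{pqr}=\phi_{qr}(x^p)$;
\item the four-factor identity of Lemma \ref{thm:josh_pqr} and its analogues, e.g.\ $\phi_r\phi_{pr}\phi_{qr}\phi_{pqr}=\phi_r(x^{pq})$;
\item the telescoping $\phi_p\phi_q\phi_{pq}=(1-x^{pq})/(1-x)$, which follows from Equation \eqref{eq:prod_cyc}.
\end{itemize}

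Every unconditional row has $a=2$ and $\alpha=0$, which organizes the work.

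On the $B$ side, $B_{0\beta\gamma\delta}=x\phi_r^2\phi_{pr}^\beta\phi_{qr}^\gamma\phi_{pqr}^\delta$. For each row I will choose how to distribute the copies of $\phi_r$ with the other factors:
\begin{itemize}
\item If $\beta=\gamma=\delta=2$, Lemma \ref{thm:josh_pqr} gives $x\phi_r(x^{pq})^2$.
\item If $(\beta,\gamma,\delta)=(1,1,1)$, I get $x\phi_r\cdot\phi_r(x^{pq})$.
\item If $(\beta,\gamma,\delta)=(1,2,1)$, I get $x\phi_r(x^{pq})\phi_r(x^q)$.
\item If $\delta=0$, each $\phi_r$ is paired with at most one of $\phi_{pr}$ or $\phi_{qr}$, giving e.g.\ $x\phi_r(x^p)\phi_r(x^q)$ for $(1,1,0)$.
\end{itemize}

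On the $A$ side, $A_{2bcd}=x\phi_p^2\phi_q^2\phi_{pq}^2\phi_{pr}^b\phi_{qr}^c\phi_{pqr}^d$. I will use one copy of $\phi_{pq}$ to turn $\phi_p$ into $\phi_p(x^q)$ and the other to turn $\phi_q$ into $\phi_q(x^p)$. Then $\phi_p(x^q)\phi_{pr}(x^q)=\phi_p(x^{qr})$, together with $\phi_{pr}\phi_{pqr}=\phi_{pr}(x^q)$, absorbs the pairs $(\phi_{pr},\phi_{pqr})$, and symmetrically for $q$. When $d=2$ and $b,c\ge1$, the cleaner route is
\[A=x\left(\frac{1-x^{pqr}}{(1-x)\phi_r}\right)^2\cdot(\text{leftovers}),\]
where the leftovers are products of $\phi_{pr}^{-1}$, $\phi_{qr}^{-1}$ that I recombine with $\phi_r$-free pieces. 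For instance $A_{2222}=x\big((1-x^{pqr})/(1-x^r)\big)^2$. Likewise $A_{2000}=x\big((1-x^{pq})/(1-x)\big)^2$, and $A_{2202}=x\,\phi_p(x^{qr})^2\phi_q(x^p)^2$. I expect each of the eleven unconditional rows to close with a two- or three-line identity chain of this sort, and I would put most of them in the appendix, as the paper did for Theorem \ref{thm:positive}.

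I expect the main obstacle to be the final row $(1,1,1,1)\leftrightarrow(1,1,1,1)$. There, not enough factors are available to telescope every $\phi_p,\phi_q,\phi_r$ fully, so some genuine sign cancellation remains, just as with $Q_{01}$. My plan for it mirrors Proposition \ref{prop:q01_form}:
\begin{enumerate}
\item Pair what I can, e.g.\ $B_{1111}=x\phi_r\cdot\phi_r\phi_{pr}\phi_{qr}\phi_{pqr}/\phi_{pq}=x\phi_r\phi_r(x^{pq})/\phi_{pq}$.
\item Use $p=2$, so that $\phi_{2q}(x)=\phi_q(-x)$ by Equation \eqref{eq:parity}, to express the leftover quotient as an alternating series.
\item Bound the resulting coefficient sums using $r=\max(p,q,r)$: since $r>q$, the blocks from $\phi_r(x^{pq})$ do not overlap the alternating part harmfully.
\item Handle $A_{1111}$ similarly with $q<r$.
\end{enumerate}
Since that row is conditional, I would state it separately (as the paper does with Lemma \ref{lem:pqr}) and read Theorem \ref{thm:pqr} as asserting unconditional positivity for the first eleven rows.
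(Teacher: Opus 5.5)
Your treatment of the eleven unconditional rows follows the route the paper intends: the paper leaves those rows to the reader as exercises in exactly these telescoping identities, and your approach works, with one slip. $A_{2202}$ is $x\,\phi_q^2\,\phi_p(x^{qr})^2$, not $x\,\phi_p(x^{qr})^2\phi_q(x^p)^2$. Both copies of $\phi_{pq}$ are used up inside the two copies of $\phi_p\phi_{pq}\phi_{pr}\phi_{pqr}=\phi_p(x^{qr})$, so none is left to turn $\phi_q$ into $\phi_q(x^p)$; a degree comparison confirms your formula is off. Positivity is unaffected. Also, the theorem does cover the twelfth row under its stated condition, so you cannot read it as a statement about the first eleven rows only. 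The paper proves that row as part of the theorem, via Lemma \ref{lem:pqr}.

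The genuine gap is in your plan for that last row.

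First, your starting identity is wrong. $B_{1111}$ contains $\phi_{pq}$ to the first power, so $B_{1111}=x\,\phi_r\,\phi_{pq}\,\phi_r(x^{pq})$. Your expression $x\phi_r\phi_r(x^{pq})/\phi_{pq}$ is not even a polynomial, since $\phi_{pq}$ does not divide $\phi_r^2\phi_{pr}\phi_{qr}\phi_{pqr}$.

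Second, the mechanism in your step 3 cannot work. With $p=2$ and Equation \eqref{eq:parity}, $B_{1111}=x\,\phi_r(x)\,\phi_q(-x)\,\phi_r(x^{2q})$. The factor $\phi_r(x^{2q})$ cannot absorb the signs of $\phi_q(-x)$: already $\phi_q(-x)\phi_r(x^{2q})$ has coefficient $-1$ at $x$. The cancellation has to come from $\phi_r(x)$. The missing lemma is the one the paper proves: for odd $q<r$,
\[\phi_r(x)\phi_q(-x)=\frac{1-x^r}{1-x}\cdot\frac{1+x^q}{1+x}\]
has nonnegative coefficients. Indeed, $[x^j]$ of it is $\sum_{k=\max(0,j-r+1)}^{\min(j,q-1)}(-1)^k$. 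This is a run of alternating signs that either starts at $k=0$, or (when $j\ge r>q-1$) ends at the even index $k=q-1$. Either way it equals $0$ or $1$. The paper writes the remaining factor as $\phi_r(x^q)\phi_r(-x^q)$, which is $\phi_r(x^{2q})$. The case $q=2$ is symmetric.

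Finally, $A_{1111}$ needs no sign analysis and no condition at all:
\[A_{1111}=x\,\phi_p\phi_q\,\phi_p(x^r)\phi_q(x^{pr})=x\,\phi_p\phi_q\,\frac{1-x^{pqr}}{1-x^r}.\]
So ``handle $A_{1111}$ similarly with $q<r$'' misplaces where the difficulty lies.
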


There are two parts to this Theorem. For the first 11 rows of Table \ref{tab:pos2}, the positivity of the corresponding coefficients can be proven using various cyclotomic identities as done throughout this paper. Thus, we leave this to the reader to verify using said techniques.
We focus our energies instead on the last entry.

\begin{lemma}\label{lem:pqr}
$A_{1111}$ has positive coefficients for all $p,q,r$, while  $B_{1111}$ has positive coefficients if and only if $\max(p,q,r)=r$ and $\min(p,q,r)=2$.
\end{lemma}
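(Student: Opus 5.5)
We first reduce both polynomials to a product of evaluations of $\phi_p$, $\phi_q$, $\phi_r$, together with one explicit rational function.

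\textbf{The polynomial $A_{1111}$.} Lemma \ref{thm:josh_pqr} gives $\phi_p\phi_{pq}\phi_{pr}\phi_{pqr}=\phi_p(x^{qr})$. Equation \eqref{eq:divisor} gives $\phi_q\phi_{qr}=\phi_q(x^r)$. Together these yield
\[A_{1111}(x)=x\,\phi_p(x)\,\phi_q(x)\,\phi_q(x^r)\,\phi_p(x^{qr}).\]
By Equation \eqref{eq:cyc_prime} this has nonnegative coefficients for every choice of distinct primes $p,q,r$.

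\textbf{Rewriting $B_{1111}$.} Apply Lemma \ref{thm:josh_pqr} with the roles of the primes permuted, so that $r$ is the distinguished prime: $\phi_r\phi_{pr}\phi_{qr}\phi_{pqr}=\phi_r(x^{pq})$. This gives $B_{1111}=x\,\phi_r(x)\,\phi_{pq}(x)\,\phi_r(x^{pq})$. Writing everything in rational form and cancelling,
\[\frac{B_{1111}}{x}=\frac{(1-x^r)(1-x^{pqr})}{(1-x^p)(1-x^q)}.\]
Let $N(n)$ be the number of pairs $(i,j)\in\N_0^2$ with $ip+jq=n$. Then for $n<pqr$ we have $[x^n]B_{1111}/x=N(n)-N(n-r)$.

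This polynomial is a product of cyclotomic polynomials, hence palindromic. Its degree $r+pqr-p-q$ is less than $2pqr$, so it suffices to control the coefficients with $n\le \tfrac12\deg<pqr$. For every such $n$ the simple formula $N(n)-N(n-r)$ applies.

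\textbf{Sufficiency.} Suppose $r$ is the largest prime and $2\in\{p,q\}$; by symmetry take $p=2$, with $q$ odd. Then $N(n)$ counts the $j$ with $0\le j\le n/q$ and $j\equiv n\pmod 2$. Likewise $N(n-r)$ counts the $j$ with $j\le (n-r)/q$ and $j\equiv n+1\pmod 2$, since $r$ is odd. The shift $j\mapsto j+1$ maps the second set injectively into the first: parity is preserved, and $(n-r)/q+1\le n/q$ because $r>q$. Hence $N(n)\ge N(n-r)$, and all coefficients are nonnegative.

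\textbf{Necessity, when $r$ is not the largest.} Then $r<\max(p,q)$. Because $r$ is a prime distinct from $p$ and $q$, either $r$ is below both of them or it lies strictly between them; in both cases $r$ is not representable as $ip+jq$, so $N(r)=0$. Since $N(0)=1$, we get $[x^r]B_{1111}/x=-1<0$.

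\textbf{Necessity, when $r$ is largest but $p,q$ are both odd.} This is the step I expect to be the main obstacle. We must exhibit some $n<pqr$ with $N(n)=0<N(n-r)$, or use palindromicity to reach such an $n$.

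My first attempt would use parity, since with $p,q$ odd a representation $ip+jq$ has parity $i+j$. I would test small targets such as $n=r+p$, $n=r+q$ and $n=r+p+q$, for which $N(n-r)\ge1$ is automatic. Failing that, I would take $n$ to be the Frobenius number $pq-p-q$ (so $N(n)=0$) whenever $pq-p-q\ge r$, and look for a representation of $n-r$. The remaining regime, $r>pq-p-q$, would be handled by computing the coefficient just below the middle degree and using the $(1-x^{pqr})$ factor's symmetry to produce a negative entry. I expect this final case analysis, in the style of Lemma \ref{lemma:qpk}, to be where the real work lies.
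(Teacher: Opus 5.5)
Your factorization of $A_{1111}$ is correct. So are the rational form $B_{1111}/x=(1-x^r)(1-x^{pqr})/\bigl((1-x^p)(1-x^q)\bigr)$, the palindromic reduction to $n<pqr$, the shift injection for $p=2$, and the computation $[x^r]B_{1111}/x=-1$ when $r$ is not the largest prime. Your sufficiency argument is also cleaner than the paper's factorization into $\phi_r\phi_q(-x)$ and $\phi_r(x^q)\phi_r(-x^q)$.

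The gap is the case where $p,q$ are both odd and $r$ is largest, and it cannot be closed: the ``only if'' direction is false there. Your own injection generalizes verbatim. If $r=i_0p+j_0q$ with $i_0,j_0\ge 0$, then $(i,j)\mapsto(i+i_0,j+j_0)$ sends representations of $n-r$ injectively to representations of $n$. Hence $N(n)\ge N(n-r)$ for all $n$, and by your palindromic reduction $B_{1111}$ has no negative coefficient. For example, with $(p,q,r)=(3,5,11)$ one has $11=2\cdot 3+5$ and
\[\phi_{11}(x)\phi_{15}(x)=1+x^3+x^5+x^6+x^8+x^9+x^{10}+x^{12}+x^{13}+x^{15}+x^{18}.\]
So $B_{1111}=x\,\phi_{11}(x)\phi_{15}(x)\,\phi_{11}(x^{15})$ has nonnegative coefficients even though $\min(p,q,r)=3$. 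The same happens for $(3,7,13)$, for $(5,7,17)$, and for every prime $r>pq-p-q$. Combined with your $[x^r]$ computation, the true criterion is: $B_{1111}$ has nonnegative coefficients if and only if $r\in p\mathbb{Z}_{\ge 0}+q\mathbb{Z}_{\ge 0}$. This coincides with the stated condition only when $2\in\{p,q\}$.

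Consequently none of your planned tactics for the odd case can succeed.
\begin{itemize}
\item Parity is no obstruction, since $ip+jq$ with $p,q$ odd takes both parities.
\item The Frobenius idea needs $pq-p-q-r$ to be representable. That is impossible once $r$ is representable, since then $pq-p-q$ itself would be.
\item In the regime $r>pq-p-q$ there is no negative coefficient at all.
\end{itemize}

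The paper's own proof of this case has the same flaw. It asserts $[x^r]B_{1111}/x=-1$ because sums of multiples of $p$ and $q$ are always even, which fails already for $2\cdot 3+5=11$. Its series expansion of $B_{1111}$ also carries a spurious factor $\sum_k x^{pqk}$, which your rational form correctly omits.

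A negative coefficient genuinely occurs, at $x^r$, exactly when $r$ is a gap of the semigroup generated by $p$ and $q$. Examples are $(3,5,7)$, $(3,7,11)$, $(5,7,11)$ and $(5,7,13)$. What you can actually prove, with the tools you already have, is this corrected characterization rather than the lemma as stated.
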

\begin{proof}
First, we show $A_{1111}$ has positive coefficients. Recall
\[A_{1111}=\phi_p^2\phi_q^2\phi_{pq}\phi_{qr}\phi_{pr}\phi_{pqr}\]
Using our cyclotomic identities from Section \ref{sec:cyc} by grouping together $\phi_p\phi_{pr}$ and $\phi_q\phi_{qr}\phi_{pr} \phi_{pqr}$, we get 
\[A_{1111}=\phi_p\phi_q\phi_p(x^r)\phi_q(x^{pr})\]

We move on now to $B_{1111}$. We start with the case where $p=2$. We have
\begin{align*}
B_{1111}&=\phi_r^2\phi_{2q}\phi_{2r}\phi_{qr}\phi_{2qr}\\
&=\phi_r^2\phi_q(-x)\phi_r(-x)\phi_{qr}\phi_{qr}(-x)\\
&=\phi_r\phi_r(x^q)\phi_q(-x)\phi_{r}(-x^q).
\end{align*}

We claim that $\phi_r(x^q)\phi_r(-x^q)$ and $\phi_r\phi_q(-x)$ both have positive coefficients, the latter specifically being true in the case where $q<r$. 
This follows from verifying that
% \left(\sum_{k=0}^m x^k\right)\left(\sum_{k=0}^n (-x)^k\right)=
\[ \left(\sum_{k=0}^{m-1} x^k\right)\left(\sum_{k=0}^{n-1} (-x)^k\right)={1-x^m\over 1-x}{1+x^n\over 1+x}\]
has positive coefficients if and only if $n\leq m$, where $m$ and $n$ are odd integers.
If $n>m$, then
\[ [x^{m}] \left(\sum_{k=0}^{m-1} x^k\right)\left(\sum_{k=0}^{n-1} (-x)^k\right)=\sum_{k=1}^{m} (-1)^{k}=-1.\]

Otherwise, $n\leq m$, for all $0\leq j\leq m+n-2$, we have
\[ [x^j] \left(\sum_{k=0}^{m-1} x^k\right)\left(\sum_{k=0}^{n-1} (-x)^k\right)=\sum_{k=0}^{\min(j,n)} (-1)^k\geq 0.\]
By symmetry, the case where $q=2$ is similar.

When $p,q>2$, we can guarantee $B_{1111}$ has negative coefficients. In this case, 
\[B_{1111}=\phi_r\phi_{pq}\phi_r(x^{pq})=(1-x^r)(1-x^{pqr})\left(\sum_{k\geq 0} x^{pk} \right)\left(\sum_{k\geq 0} x^{qk} \right)\left(\sum_{k\geq 0} x^{pqk} \right).\]

Note that if $p,q>2$, the coefficient of $x^r$ will be $-1$, since $pk$, $qk$, and $pqk$ can never equal $r$, and further, summing such numbers together will always be even. This is to say that the only factor contributing to the coefficient of $x^r$ is $(1-x^r)$.
\end{proof}

%\george{TODO use computation to verify remaining cases all have negatives OR actually prove it. (Add conjecture) }

\begin{example}
Let $p=2$, $q=3$, and $r=5$. Using the last entry of Table \ref{tab:pos2}, we build two generating functions which give dice that are $(2,30)$-solutions: 
\begin{align*}
A_{1111}&=x\phi_2^2\phi_3^2\phi_6\phi_{10}\phi_{15}\phi_{30}\\
&=x + 2 x^2 + 2 x^3 + x^4 + x^6 + 2 x^7 + 2 x^8 + x^9 + x^{11} + 2 x^{12} + 2 x^{13} + x^{14} \\ &+ x^{16} + 2 x^{17} + 2 x^{18} + x^{19}+ x^{21} + 2 x^{22} + 2 x^{23} + x^{24} + x^{26} + 2 x^{27} + 2 x^{28} + x^{29} 
\end{align*}

and 
\begin{align*}
B_{1111}&=x\phi_5^2\phi_{10}\phi_{15}\phi_{30}\\
&=x + x^3 + x^4 + x^5 + 2 x^7 + x^9 + x^{10} + x^{11} + 2 x^{13} + x^{15}+ x^{16} \\ 
&+ x^{17} + 2 x^{19} + x^{21} + x^{22} + x^{23} + 2 x^{25} + x^{27} + x^{28} + x^{29} + x^{31} 
\end{align*}
We omit the table for this example, as it would have 900 entries.
\end{example}

We have done extensive testing providing evidence of the following, which we hope the reader is curious enough to explore and provide proofs for.

\begin{con}\label{con:pqr}
In the remaining 33 pairs of the polynomials $A_{abcd}$ and $B_{2-a,2-b,2-c,2-d}$ (including the case where $p\neq 2$ for when $a=b=c=d=1$), one of the two polynomials will have a negative coefficient.
\end{con}

%We further propose the following problem which we hope the reader may be eager to explore. 

%\newpage

\section*{Appendix}

\appendix

\section{Additional Results for the $p^2q$ Case}\label{sec:ids}

Below include remaining cases left unproven in Section \ref{sec:psquaredq} in Theorem \ref{thm:positive}. As we were thorough in citing which results from Section \ref{sec:cyc} were useful, we omit using these here.

\begin{lemma}\label{lemma:P_01}
$P_{01}(x)$ and $Q_{21}(x)$ have positive coefficients.
\end{lemma}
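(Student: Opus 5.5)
We follow the same strategy as in the proof of Theorem \ref{thm:positive}: rewrite each polynomial as a product of evaluations of $\phi_p$ or $\phi_q$ at monomials $x^{N}$. Since each $\phi_p(x^N)$ and $\phi_q(x^N)$ has nonnegative coefficients by Equation \eqref{eq:cyc_prime}, so does any product of them.

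For $P_{01}$, recall
\[P_{01}(x)=x\phi_p^2\phi_{p^2}^2\phi_{p^2q}.\]
We pair one copy of $\phi_{p^2}$ with $\phi_{p^2q}$. Since $q\nmid p^2$, Equation \eqref{eq:divisor} gives $\phi_{p^2}\phi_{p^2q}=\phi_{p^2}(x^q)$. By Equation \eqref{eq:cyc_prime_power_mult}, $\phi_{p^2}(x)=\phi_p(x^p)$, and hence $\phi_{p^2}(x^q)=\phi_p(x^{pq})$. This yields
\[P_{01}(x)=x\,\phi_p(x)^2\,\phi_p(x^p)\,\phi_p(x^{pq}),\]
a product of polynomials with nonnegative coefficients.

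For $Q_{21}$, recall
\[Q_{21}(x)=x\phi_q^2\phi_{pq}^2\phi_{p^2q}.\]
We form the chain $\phi_q\phi_{pq}\phi_{p^2q}$. Applying Equation \eqref{eq:divisor} once gives $\phi_q\phi_{pq}=\phi_q(x^p)$. Then Equation \eqref{eq:cyc_prime_power_mult} gives $\phi_{p^2q}(x)=\phi_{pq}(x^p)$, so applying \eqref{eq:divisor} again at $x^p$ gives $\phi_q(x^p)\phi_{pq}(x^p)=\phi_q(x^{p^2})$. The remaining factors $\phi_q\phi_{pq}$ combine to $\phi_q(x^p)$. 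Therefore
\[Q_{21}(x)=x\,\phi_q(x^p)\,\phi_q(x^{p^2}),\]
which again has nonnegative coefficients. (This agrees with the example $B_{21}$ for $p=2$, $q=3$, whose exponents are all odd.)

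The only step needing any care is the bookkeeping in the second identity: we must apply \eqref{eq:divisor} in the variable $x^p$, which is legitimate because $p\nmid q$. Apart from that the computation is routine, and the statement follows from the positivity of $\phi_p(x^N)$ and $\phi_q(x^N)$.
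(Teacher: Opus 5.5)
Your proof is correct. For $Q_{21}$ it is exactly the paper's computation.

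For $P_{01}$ your grouping differs from the paper's, and yours is the right one. You pair $\phi_{p^2}\phi_{p^2q}=\phi_{p^2}(x^q)=\phi_p(x^{pq})$ to obtain $P_{01}=x\phi_p^2\phi_p(x^p)\phi_p(x^{pq})$. The paper instead rewrites $\phi_{p^2q}=\phi_{pq}(x^p)$ and then asserts $P_{01}=x\phi_p\phi_{p^2}^2\phi_p(x^q)$.

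That last step would require $\phi_p(x)\phi_{pq}(x^p)=\phi_p(x^q)$, but \eqref{eq:divisor} only gives $\phi_p(x)\phi_{pq}(x)=\phi_p(x^q)$. The paper's right-hand side therefore has degree smaller than $\deg P_{01}$ by $(p-1)^2(q-1)$; for $p=2$, $q=3$ it has degree $9$ instead of $11$. Your factorization $x(1+x)^2(1+x^2)(1+x^6)$, by contrast, reproduces the paper's example polynomial $A_{01}$ exactly.

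So the strategy is the same, but your decomposition is the one that actually establishes positivity of $P_{01}$.
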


\begin{proof}
First, note 
$$P_{01}(x)=x\phi_p^2\phi_{p^2}^2\phi_{p^2q}=x\phi_p^2\phi_{p^2}^2\phi_{pq}(x^p)=x\phi_p\phi_{p^2}^2\phi_p(x^q).$$

Next, note
$$Q_{21}(x)=x\phi_q^2\phi_{pq}^2\phi_{p^2q}=x\phi_q^2(x^p)\phi_{pq}(x^p)=x\phi_q(x^p)\phi_q(x^{p^2}).$$
\end{proof}

\begin{lemma}\label{lemma:P_12}
$P_{12}(x)$ and $Q_{10}(x)$ have positive coefficients.
\end{lemma}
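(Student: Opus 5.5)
We need $P_{12}=x\phi_p^2\phi_{p^2}^2\phi_{pq}\phi_{p^2q}^2$ and $Q_{10}=x\phi_q^2\phi_{pq}$, and we follow the paper's standard technique: regroup the cyclotomic factors so that each product collapses, via Equations \eqref{eq:divisor} and \eqref{eq:cyc_prime_power_mult}, into a product of polynomials of the form $\phi_p(x^k)$ or $\phi_q(x^k)$. Each such factor has nonnegative coefficients by Equation \eqref{eq:cyc_prime}, so the whole product does too.

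For $Q_{10}$ we pair one copy of $\phi_q$ with $\phi_{pq}$. Equation \eqref{eq:divisor} (with $p\nmid q$) gives $\phi_q\phi_{pq}=\phi_q(x^p)$, hence
\[Q_{10}(x)=x\phi_q(x)\phi_q(x^p),\]
which has nonnegative coefficients.

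For $P_{12}$ we form three groups. First, $\phi_p\phi_{pq}=\phi_p(x^q)$ by Equation \eqref{eq:divisor}, since $q\nmid p$. Second, each copy of $\phi_{p^2}\phi_{p^2q}$ equals $\phi_{p^2}(x^q)$, again by Equation \eqref{eq:divisor}. Third, by Equation \eqref{eq:cyc_prime_power_mult}, $\phi_{p^2}(y)=\phi_p(y^p)$, so $\phi_{p^2}(x^q)=\phi_p(x^{pq})$. Combining these, with one leftover $\phi_p$,
\[P_{12}(x)=x\,\phi_p(x)\,\phi_p(x^q)\,\phi_p^2(x^{pq}).\]
This is a product of polynomials with nonnegative coefficients, which completes the argument.

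There is no real obstacle here. The only care needed is to check the hypotheses of Equation \eqref{eq:divisor} (that the prime does not divide the index) and to get the variable substitution right when turning $\phi_{p^2}(x^q)$ into $\phi_p(x^{pq})$. As a sanity check, evaluating at $x=1$ gives $P_{12}(1)=p\cdot p\cdot p^2=p^4$ and $Q_{10}(1)=q^2$, as required.
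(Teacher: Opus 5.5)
Your proof is correct and follows the same approach as the paper: you regroup via Equation \eqref{eq:divisor}, pairing $\phi_q$ with $\phi_{pq}$ in $Q_{10}$ and using $\phi_p\phi_{pq}$ and $\phi_{p^2}\phi_{p^2q}$ in $P_{12}$, so each polynomial becomes a product of $\phi_p$ and $\phi_q$ evaluated at monomials. Your final factorization $P_{12}(x)=x\,\phi_p(x)\,\phi_p(x^q)\,\phi_p^2(x^{pq})$ is in fact more careful than the paper's displayed one, which drops the leftover $\phi_p$ and one copy of $\phi_{p^2}\phi_{p^2q}$.
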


\begin{proof} 
First, note
$$P_{12}(x)=x\phi_p^2\phi_{p^2}^2\phi_{pq}\phi_{p^2q}^2=x(\phi_{p}\phi_{pq})(\phi_{p^2}\phi_{p^2q})=x\phi_p{x^q}\phi_{p^2}{x^q}.$$

Next, note
$$Q_{10}(x)=x\phi_q^2\phi_{pq}=x(\phi_{q}\phi_{pq})(\phi_{q})=x\phi_{q}(x^p)\phi_{q}$$
\end{proof}

\begin{lemma}\label{lemma:P_22}
$P_{22}(x)$ and $Q_{00}(x)$ have positive coefficients.
\end{lemma}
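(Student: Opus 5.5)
We need to show that $P_{22}(x)=x\phi_p^2\phi_{p^2}^2\phi_{pq}^2\phi_{p^2q}^2$ and $Q_{00}(x)=x\phi_q^2$ have positive coefficients. The plan is the same as in the other cases of Theorem \ref{thm:positive}: rewrite each polynomial as $x$ times a product of evaluations of $\phi_p$ or $\phi_q$ at monomials.

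For $Q_{00}$ nothing needs to be done. By Equation \eqref{eq:cyc_prime}, $\phi_q=\sum_{i=0}^{q-1}x^i$ has all coefficients equal to $1$. Hence $x\phi_q^2$ has nonnegative coefficients, and explicitly $[x^{j+1}]Q_{00}=\min(j+1,2q-1-j)$ for $0\le j\le 2q-2$.

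For $P_{22}$, I will pair factors so that Equation \eqref{eq:divisor} applies, writing
\[P_{22}=x(\phi_p\phi_{pq})^2(\phi_{p^2}\phi_{p^2q})^2.\]
Since $q\nmid p$, Equation \eqref{eq:divisor} gives $\phi_p\phi_{pq}=\phi_p(x^q)$. Since $q\nmid p^2$, it also gives $\phi_{p^2}\phi_{p^2q}=\phi_{p^2}(x^q)$. Equation \eqref{eq:cyc_prime_power_mult} gives $\phi_{p^2}(y)=\phi_p(y^p)$, so $\phi_{p^2}(x^q)=\phi_p(x^{pq})$. Together these yield
\[P_{22}(x)=x\,\phi_p^2(x^q)\,\phi_p^2(x^{pq}).\]
This is a product of polynomials with nonnegative integer coefficients, so it has nonnegative coefficients. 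Evaluating at $1$ gives $p^4$, as required.

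As a consistency check, $P_{22}Q_{00}=x^2\phi_q^2\phi_p^2(x^q)\phi_p^2(x^{pq})$. This should equal $x^2\big((1-x^{p^2q})/(1-x)\big)^2$, and it does, because
\[\phi_q\,\phi_p(x^q)\,\phi_p(x^{pq})=\frac{1-x^q}{1-x}\cdot\frac{1-x^{pq}}{1-x^q}\cdot\frac{1-x^{p^2q}}{1-x^{pq}}=\frac{1-x^{p^2q}}{1-x}.\]

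No step is a real obstacle. The only point needing care is checking the coprimality hypotheses in Equation \eqref{eq:divisor} and applying Equation \eqref{eq:cyc_prime_power_mult} with the correct inner argument $x^q$.
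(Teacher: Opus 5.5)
Your proof is correct and takes essentially the same route as the paper. Like the paper, you pair $\phi_p\phi_{pq}$ and $\phi_{p^2}\phi_{p^2q}$ and apply Equation \eqref{eq:divisor} to get $x\,\phi_p^2(x^q)\,\phi_{p^2}^2(x^q)$, and $Q_{00}=x\phi_q^2$ is immediate. Your further rewriting of $\phi_{p^2}(x^q)$ as $\phi_p(x^{pq})$ and your product check are harmless extras, and you rightly read ``positive'' as ``nonnegative''.
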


\begin{proof} 
First, note
$$P_{22}(x)=x\phi_p^2\phi_{p^2}^2\phi_{pq}^2\phi_{p^2q}^2=x(\phi_{p}\phi_{pq})^2(\phi_{p^2}\phi_{p^2q})^2=x(\phi_p{x^q})^2(\phi_{p^2}(x^q))^2.$$

Next, note $Q_{00}(x)=x\phi_q^2$, which manifestly has positive coefficients in its current form.
\end{proof}

\begin{lemma}\label{lemma:P_21}
$P_{21}(x)$ always has positive coefficients.
\end{lemma}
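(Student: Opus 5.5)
The plan is to follow the same technique used for the other cases of Theorem \ref{thm:positive}. We will rewrite $P_{21}$ as a product of cyclotomic polynomials of prime index evaluated at monomials $x^{k}$. Each such factor $\phi_p(x^k)=\sum_{i=0}^{p-1}x^{ki}$ has only nonnegative coefficients by Equation \eqref{eq:cyc_prime}. A product of polynomials with nonnegative coefficients again has nonnegative coefficients, which gives the claim. By definition,
\[P_{21}(x)=x\,\phi_p^2\phi_{p^2}^2\phi_{pq}^2\phi_{p^2q},\]
and the key step is choosing how to group the factors.

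First, pair each copy of $\phi_p$ with a copy of $\phi_{pq}$. Since $q\nmid p$, Equation \eqref{eq:divisor} (with the roles of $p$ and $q$ swapped) gives $\phi_p\phi_{pq}=\phi_p(x^q)$, so $(\phi_p\phi_{pq})^2=\phi_p^2(x^q)$. Second, split $\phi_{p^2}^2$ into two copies and pair one copy with $\phi_{p^2q}$. Since $q\nmid p^2$, Equation \eqref{eq:divisor} gives $\phi_{p^2}\phi_{p^2q}=\phi_{p^2}(x^q)$. Third, use Equation \eqref{eq:cyc_prime_power_mult} (with $m=1$, $k=2$) to write $\phi_{p^2}(x)=\phi_p(x^p)$, and hence $\phi_{p^2}(x^q)=\phi_p(x^{pq})$.

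Assembling these steps, we expect
\[P_{21}(x)=x\,\phi_p^2(x^q)\,\phi_p(x^p)\,\phi_p(x^{pq}).\]
Every factor here is of the form $\sum_{i=0}^{p-1}x^{ki}$, so $P_{21}$ has nonnegative coefficients. As a sanity check, evaluating at $x=1$ gives $p^2\cdot p\cdot p=p^4$, consistent with Equation \eqref{eq:eval_prime_power}.

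There is no real obstacle; the only care needed is the bookkeeping of exponents. With only one copy of $\phi_{p^2q}$, exactly one copy of $\phi_{p^2}$ gets absorbed. The leftover $\phi_{p^2}$ must then be handled on its own via Equation \eqref{eq:cyc_prime_power_mult} rather than being paired.
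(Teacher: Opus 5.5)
Your proof is correct and uses the same grouping as the paper: $\phi_p\phi_{pq}=\phi_p(x^q)$ and $\phi_{p^2}\phi_{p^2q}=\phi_{p^2}(x^q)=\phi_p(x^{pq})$, so that every factor is visibly a sum of monomials. Your bookkeeping is in fact more careful than the paper's displayed chain, which drops the leftover factor $\phi_{p^2}=\phi_p(x^p)$ (its final product evaluates to $p^3$ at $x=1$ rather than $p^4$); your factorization $x\,\phi_p^2(x^q)\,\phi_p(x^p)\,\phi_p(x^{pq})$ is the correct one, and the conclusion is unaffected.
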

\begin{proof}
$$P_{21}(x)=x\phi_p^2\phi_{p^2}^2\phi_{pq}^2\phi_{p^2q}=x(\phi_p\phi_{pq})^2(\phi_{p^2}\phi_{p^2q})=x\phi_p^2(x^q)\phi_{p}(x^{pq})$$
\end{proof}

\section{Computational Results}

\begin{proposition}\label{prop:neg_ceil_floor}
  $\lfloor -x\rfloor=-\lceil x\rceil$
\end{proposition}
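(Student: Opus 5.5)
Since the floor function is defined by $\lfloor y\rfloor=\max\{n\in\mathbb Z: n\le y\}$ and the ceiling by $\lceil y\rceil=\min\{n\in\mathbb Z: n\ge y\}$, I will prove the identity directly from these characterizations rather than by cases on whether $x$ is an integer. The key observation is that negation $n\mapsto -n$ is an order-reversing bijection of $\mathbb Z$ onto itself. Under this bijection the set $\{n\in\mathbb Z: n\le -x\}$ corresponds to the set $\{m\in\mathbb Z: m\ge x\}$, via $m=-n$.

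Concretely, the first step is to fix $x\in\mathbb R$ and write $S=\{n\in\mathbb Z: n\le -x\}$ and $T=\{m\in\mathbb Z: m\ge x\}$. The second step is to observe that $n\in S$ if and only if $-n\in T$, since $n\le -x \iff -n\ge x$. Hence $S=-T$. The third step uses the fact that negation turns a minimum into a maximum. Thus $\max S=\max(-T)=-\min T$, which reads $\lfloor -x\rfloor=-\lceil x\rceil$.

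The only point needing a word of justification is that these extrema exist. Both $S$ and $T$ are nonempty, and $S$ is bounded above while $T$ is bounded below, by the Archimedean property of $\mathbb R$. Nonempty sets of integers bounded on the relevant side attain their maximum or minimum, so $\max S$ and $\min T$ exist and the argument goes through.

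There is no real obstacle here. For readers who prefer it, I could instead give an alternative two-case check: if $x\in\mathbb Z$, both sides equal $-x$; otherwise write $x=n+\theta$ with $0<\theta<1$, so that $\lceil x\rceil=n+1$ and $-x=-(n+1)+(1-\theta)$ gives $\lfloor -x\rfloor=-(n+1)$.
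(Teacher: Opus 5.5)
Your proof is correct. The equivalence $n\le -x \iff -n\ge x$ gives $\{n\in\mathbb Z: n\le -x\}=-\{m\in\mathbb Z: m\ge x\}$, and since negation turns the minimum of the second set into the maximum of the first, you get exactly $\lfloor -x\rfloor=-\lceil x\rceil$. Your two-case check is also valid.

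The paper states this proposition as a standard fact and gives no proof, so there is no argument of theirs to compare against; either of your arguments supplies what the paper omits.
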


\begin{proposition}\label{prop:floor2ceil}
  $\lfloor x + \frac{1}{2}\rfloor=\lceil x\rceil $
\end{proposition}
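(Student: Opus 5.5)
The plan is to reduce to the case $x\ge 0$ using Proposition \ref{prop:neg_ceil_floor}, or more simply to argue directly from the defining property of the floor function. Write $x = n + f$ with $n=\lfloor x\rfloor\in\mathbb{Z}$ and $0\le f<1$. Then $x+\frac12 = n + (f+\frac12)$, and $\lfloor x+\frac12\rfloor = n + \lfloor f+\frac12\rfloor$, since an integer shift commutes with the floor. So the statement reduces to comparing $\lfloor f+\frac12\rfloor$ with $\lceil x\rceil - n = \lceil f\rceil$.

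The key observation is that, as stated for all real $x$, the identity is false. For $x=0.1$ we get $\lfloor 0.6\rfloor = 0$ but $\lceil 0.1\rceil = 1$. So the plan must first pin down the domain on which it is intended. Looking at how Proposition \ref{prop:floor2ceil} is used in the section on $Q_{01}$, it is applied to quantities of the form $\frac{k-q-1}{2}$, that is, to $x\in\frac12\mathbb{Z}$, where it converts $\lceil (k-q-1)/2\rceil$ into $\lfloor (k-q)/2\rfloor$. I will therefore prove the statement for $x$ an integer or half-integer, which is exactly the case the paper needs, and note that this restriction is necessary.

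With $x\in\frac12\mathbb{Z}$ we have $f\in\{0,\frac12\}$, and there are two cases. If $f=0$, then $\lfloor \frac12\rfloor = 0 = \lceil 0\rceil$. If $f=\frac12$, then $\lfloor 1\rfloor = 1 = \lceil \frac12\rceil$. In both cases $\lfloor x+\frac12\rfloor = n+\lceil f\rceil = \lceil x\rceil$, which proves the identity. No step is hard; the only real obstacle is recognizing and stating the half-integer hypothesis, because for general $x$ one only gets $\lfloor x+\frac12\rfloor$ equal to the nearest integer to $x$ (rounding halves up), not the ceiling.
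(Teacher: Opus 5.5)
Your proof is correct, and the point you flagged is the important one. The paper lists this proposition in the appendix with no proof and no hypothesis, and for arbitrary real $x$ it is false: your example $x=0.1$ gives $\lfloor 0.6\rfloor=0\neq 1=\lceil 0.1\rceil$. So there is no argument in the paper to compare with. Your reduction $x=n+f$, followed by checking $f\in\{0,\tfrac12\}$, is a complete proof of the version that is true and that the paper actually needs.

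Every use in the paper is at a point of $\tfrac12\mathbb{Z}$:
\begin{itemize}
\item at $x=(k-q-1)/2$, in the lemma treating $q\le k<2q-2$ for $Q_{01}$, as you identified;
\item at $x=(m+1)/2$, in the last step of the induction for Proposition~\ref{prop:alt_even} (the displayed $\lfloor\frac{m+2}{2}+\frac12\rfloor$ there should read $\lfloor\frac{m+1}{2}+\frac12\rfloor$);
\item at $x=(y-1)/2$ for integer $y$, in the identity $\lfloor y/2\rfloor=\lceil (y+1)/2\rceil-1$ used in Lemma~\ref{lem:q01_p2_comp}.
\end{itemize}

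One small correction to your wording: half-integrality is sufficient but not necessary. Your own reduction shows the identity holds exactly when the fractional part satisfies $f=0$ or $f\ge\tfrac12$; for instance, $x=0.7$ works. What must be excluded is only $0<f<\tfrac12$. Adding the hypothesis $x\in\tfrac12\mathbb{Z}$, as you propose, is still the natural repair for the paper.
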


We now mention a few summation identities. The first one is a classical combinatorics result. 

\begin{proposition}\label{prop:triangle}
$$\sum_{k=1}^n k = \frac{n(n+1)}{2}$$
\end{proposition}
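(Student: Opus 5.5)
We prove the identity $\sum_{k=1}^n k = \frac{n(n+1)}{2}$ by induction on $n$, with a pairing argument as a cross-check.

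\emph{Base case.} For $n=1$ the left side is $1$ and the right side is $\frac{1\cdot 2}{2}=1$. If one allows $n=0$, both sides are the empty sum $0$.

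\emph{Inductive step.} Assume the formula holds for $n$. Peel off the last term:
\[\sum_{k=1}^{n+1} k=\sum_{k=1}^n k+(n+1).\]
By the inductive hypothesis this equals
\[\frac{n(n+1)}{2}+(n+1)=\frac{(n+1)(n+2)}{2},\]
which is the claimed formula with $n$ replaced by $n+1$.

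\emph{Alternative (Gauss pairing).} Write the sum forwards and backwards and add the two copies termwise. Each pair $k+(n+1-k)$ equals $n+1$, and there are $n$ pairs, so twice the sum is $n(n+1)$.

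The argument has no real obstacle. The only step to watch is the algebra $n(n+1)/2+(n+1)=(n+1)(n+2)/2$, obtained by factoring out $n+1$. The paper applies this with $n=a-1$ in the proof of Theorem \ref{theorem:cs2n}, where it gives $\sum_{k=1}^{a-1}2k=a(a-1)$.
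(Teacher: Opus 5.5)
Your proof is correct. The inductive step $\frac{n(n+1)}{2}+(n+1)=\frac{(n+1)(n+2)}{2}$ is right, and the pairing argument independently confirms the formula.

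The paper gives no proof of its own here; it just cites the identity as a classical combinatorics result. So there is nothing to compare against, and your induction (or the pairing alone) is the standard argument one would supply.

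Your remark that the identity yields $\sum_{k=1}^{a-1}2k=a(a-1)$ in the proof of Theorem \ref{theorem:cs2n} also matches how the paper uses it.
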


We provide proofs for the next two identities.

\begin{proposition}\label{prop:alt_odd}
 $$\sum_{k=0}^n (2k+1)(-1)^k=(-1)^n(n+1).$$
\begin{proof}
We will let $\displaystyle S_n=\sum_{k=0}^n(2k-1)(-1)^k$.
We proceed using induction on $n$. \\This holds for the base case when $n=0$, as
$$(2(0)+1)(-1)^0=1=(-1)^0(0+1).$$
Assume for some $m\geq 0$ we have $S_m=(-1)^m(m+1)$. Now,

\begin{align*}
S_{m+1} &= \sum_{k=0}^{m+1}(2k+1)(-1)^k
\\&=S_m+ (2(m+1)+1)(-1)^{m+1}\\
&= (-1)^m(m+1) + (2m+3)(-1)^{m+1}\\
&= (-1)^{m+1}(-m-1)+(2m+3)(-1)^{m+1}\\
&= (-1)^{m+1}(-m-1+2m+3)\\
&= (-1)^{m+1}(m+2)\\
&= (-1)^{m+1}((m+1)+1)
\end{align*}
as desired. \end{proof}

\end{proposition}

%\cooper{is it satisfactory?}
\begin{proposition}\label{prop:alt_even}
$$\sum_{k=0}^n 2k(-1)^k=(-1)^n2\left\lceil{\frac{n}{2}}\right\rceil.$$
\begin{proof}
We will let $\displaystyle S_n = \sum_{i=0}^n 2i(-1)^i$ and proceed using induction on $n$.

The base case of $n = 0$ yields $$2(0)(-1)^0 = 0= (-1)^0 2\left\lceil \frac{0}{2}\right\rceil $$

Assume for $ m \geq 0$ we have  $S_m = (-1)^m2\lceil{\frac{m}{2}}\rceil$. Observe

\begin{align*}
S_{m+1} &= (-1)^{m+1} 2 \left\lceil \frac{m+1}{2} \right\rceil \\
&=(-1)^m 2\bigl(\left\lceil \tfrac{m}{2}\right\rceil - m - 1\bigr) \\
&= (-1)^m 2\left\lceil \frac{-m-2}{2} \right\rceil \\
&= (-1)^m 2\bigl(-\lfloor \tfrac{m+2}{2}\rfloor\bigr) \\
&= (-1)^m 2\bigl(-\left\lfloor \tfrac{m+2}{2}+\tfrac{1}{2}\right\rfloor\bigr) \\
&= (-1)^{m+1} 2\left\lceil \frac{m+1}{2}\right\rceil
\end{align*}
\end{proof}

\end{proposition}

\begin{lemma}\label{lemma:qpk} Let $q$ and $p$ be integers numbers such that $p<q$. 
There exists an integer $0\leq k\leq p$ so that

$$\left\lfloor\frac{q+k}{p^2}\right\rfloor = \left\lfloor\frac{q+k-p}{p^2}\right\rfloor.$$

\end{lemma}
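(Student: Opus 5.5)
The plan is to reduce the equality of floors to a condition on the residue of $q+k$ modulo $p^2$, and then pick $k$ explicitly. Throughout I assume $p\geq 2$, as in every application in Section \ref{sec:psquaredq}, where $p$ is prime. This assumption is genuinely needed: for $p=1$ we have $p^2=1$, so $\lfloor q+k\rfloor\neq\lfloor q+k-1\rfloor$ for every $k$ and the statement fails.

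The key observation is the following. For any integer $N$, write $N=p^2t+s$ with $0\leq s<p^2$. Then
\[\left\lfloor \frac{N}{p^2}\right\rfloor=\left\lfloor\frac{N-p}{p^2}\right\rfloor \iff s\geq p.\]
To see this, note $\lfloor N/p^2\rfloor=t$. Since $s-p\geq -p>-p^2$, we also have $\lfloor (N-p)/p^2\rfloor = t+\lfloor (s-p)/p^2\rfloor$. Here $\lfloor (s-p)/p^2\rfloor$ equals $0$ when $s\geq p$ (because $s-p<p^2$) and equals $-1$ when $s<p$. Equivalently, the two floors agree exactly when no multiple of $p^2$ lies in the window $(N-p,N]$. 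This holds for negative $N-p$ as well, so no positivity assumption on $q-p$ is needed beyond what is given.

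With this in hand, write $q=p^2t+s$ with $0\leq s<p^2$ and split into two cases.
\begin{itemize}
\item If $s\geq p$, take $k=0$. The residue of $q+k$ is $s\geq p$, so the floors agree.
\item If $s<p$, take $k=p-s$, which satisfies $1\leq k\leq p$. Then $q+k=p^2t+p$, and because $p<p^2$ (this is where $p\geq 2$ is used) its residue is exactly $p$. Hence the criterion gives equality.
\end{itemize}
In both cases $0\leq k\leq p$, which proves the lemma.

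There is no substantial obstacle here. The only point needing care is the residue criterion, in particular checking that subtracting $p$ changes the floor by at most one. That uses $p<p^2$, so the hypothesis $p\geq 2$ should be stated explicitly. The hypothesis $p<q$ is not actually needed for the argument.
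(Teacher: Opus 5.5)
Your proof is correct and is essentially the paper's argument. When $k=0$ fails, the paper locates the multiple $mp^2$ of $p^2$ in $(q-p,q]$ and sets $k=mp^2-(q-p)$, which is exactly your $k=p-s$, so that $q+k=mp^2+p<(m+1)p^2$. Your residue formulation states the criterion more cleanly, and you are right that $p\ge 2$ must be assumed: the paper uses $p<p^2$ without comment, and the statement as written fails for $p=1$.
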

\begin{proof}
% There is a statement that follows
% $$\lfloor\frac{q+k}{p^2}\rfloor = \lfloor\frac{q+k-p}{p^2}\rfloor$$
% $k$ is an integer where $0\leq k \leq p$.
If the equation holds for when $k=0$, then we are done. If it does not, we will there is an integer $m$ such that
$$q-p < mp^2 \leq q.$$ See Figure \ref{fig:numbline1} for a visual interpretation.

We let $k=mp^2-(q-p)$, thus, $q-p+k=mp^2$.
% value that shifts $q-p$ to equal an $mp^2$. 
% Since the two points maintain a distance $p$, the new point will be $mp^2+p$.
Observe $q+k=mp^2+p$, and thus
\[{q+k\over p^2}={mp^2+p\over p^2}<(m+1).\] 
% Since the stated floor function(s) look at units per $p^2$ for whole numbers, the next 'tick' would be at $(m+1)p^2$; \\

Notably, $q+k<(m+1)p^2$, so there exists a $k$ in the desired range that will make the statement always true. See Figure \ref{fig:numbline2}, again for a visual interpretation.
\end{proof}

%\george{Remove colons from figure}

\begin{figure}[h]
% body of the figure
\begin{center}

\begin{tikzpicture}[x=2.4cm]
\draw[latex-latex] (-2.2,0) -- (2.2,0) ;
\foreach \x/\label in  {-2/{$(m-2)p^2$}, -1/{$(m-1)p^2$}, 0/{$mp^2$}, 1/{$(m+1)p^2$}, 2/{$(m+2)p^2$}} {
  \draw[shift={(\x,0)}] (0pt, 5pt) -- (0pt, -5pt); 
  \node[below] at (\x,0) {\label};
}
\filldraw[black] (-0.3,0) circle (2pt);\filldraw[black] (0.4,0) circle (2pt);
\node[above] at (-0.3,0.1) {$q-p$};
\node[above] at (0.4,0.1) {$q$};
\end{tikzpicture}
\end{center}

\caption{}\label{fig:numbline1}
\end{figure}

\vspace{.2in}
\begin{figure}[h]
% body of the figure

\begin{center}

\begin{tikzpicture}[x=2.4cm]
\draw[latex-latex] (-2.2,0) -- (2.2,0) ; 
\foreach \x/\label in  {-2/{$(m-2)p^2$}, -1/{$(m-1)p^2$}, 0/{$mp^2$}, 1/{$(m+1)p^2$}, 2/{$(m+2)p^2$}} {
  \draw[shift={(\x,0)}] (0pt, 5pt) -- (0pt, -5pt); 
  \node[below] at (\x,0) {\label};
}
\filldraw[black] (0,0) circle (2pt);
\filldraw[black] (0.7,0) circle (2pt);
\node[above] at (0,0.1) {$q-p+k$};
\node[above] at (0.7,0.1) {$q+k$};
\end{tikzpicture}
\end{center}
\caption{}\label{fig:numbline2}
\end{figure}

\begin{cor}\label{cor:qpk} Let $q$ and $p$ be integers numbers such that $2<p<q$. 
There exists an integer $0\leq k\leq p$ so that

$$\left\lfloor\frac{q+k}{p^2}\right\rfloor = \left\lfloor\frac{q+k-p}{p^2}\right\rfloor= \left\lfloor\frac{q+k-2p}{p^2}\right\rfloor.$$
In particular, when $2p>q$, there is $k$ for which all three are $0$.
\end{cor}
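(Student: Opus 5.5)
We need to prove Corollary \ref{cor:qpk}: for integers $2<p<q$ there is an integer $0\le k\le p$ with
\[\left\lfloor\tfrac{q+k}{p^2}\right\rfloor=\left\lfloor\tfrac{q+k-p}{p^2}\right\rfloor=\left\lfloor\tfrac{q+k-2p}{p^2}\right\rfloor .\]
The three numbers $q+k-2p<q+k-p<q+k$ lie in an interval of length $2p$. Since $p>2$, we have $2p<p^2$, so this interval is strictly shorter than one period $p^2$. All three floors agree exactly when no multiple of $p^2$ lies in the half-open window $(q+k-2p,\,q+k]$. So the plan is to mimic the proof of Lemma \ref{lemma:qpk}, shifting the window to the right until its left endpoint meets a multiple of $p^2$.

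First, if no multiple of $p^2$ lies in $(q-2p,q]$, take $k=0$ and we are done. Otherwise, because the window has length $2p<p^2$, it contains exactly one multiple of $p^2$, say $mp^2$ with $q-2p<mp^2\le q$. I would like to set $k=mp^2-(q-2p)$, so that $q+k-2p=mp^2$ and $q+k=mp^2+2p<(m+1)p^2$. Then all three numbers lie in $[mp^2,(m+1)p^2)$, and every floor equals $m$. The only concern is the range of $k$: this choice gives $1\le k\le 2p$, not $k\le p$.

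This range restriction is the main obstacle. To handle it, I would split according to where $mp^2$ sits.

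If $q-p<mp^2\le q$, then only the top two values can straddle $mp^2$. In this case I would take $k=mp^2-(q-p)\in[1,p]$. This puts $q+k-p=mp^2$ and $q+k=mp^2+p<(m+1)p^2$, so the top two floors equal $m$. However, $q+k-2p=mp^2-p$ has floor $m-1$, so this choice fails. Choosing the smaller $k$ that pushes $q+k$ just below $mp^2$ is impossible with $k\ge0$. So whenever $mp^2$ lies within distance $p$ below $q$ (but above $q-p$), no $k\in[0,p]$ works in general.

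A concrete check confirms this: $p=3$, $q=10$ gives floors $(\lfloor (10+k)/9\rfloor,\lfloor (7+k)/9\rfloor,\lfloor (4+k)/9\rfloor)$. For $k=0,1,2,3$ these are $(1,0,0)$, $(1,0,0)$, $(1,1,0)$, $(1,1,0)$. There is never equality, so the statement as written is false for $k\le p$.

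I would therefore prove the corrected version with $0\le k\le 2p$ via the shift above. Since $k\le 2p<p^2$, this version still suffices for the application in the $Q_{02}$ lemma after re-checking the range used there: the terms $-2(k+1)+4\lfloor k/p\rfloor$ remain negative for $k\le 2p$, since $-2(2p+1)+8<0$ when $p\ge3$.

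For the "in particular" clause, take the case $2p>q$. Then $q-2p<0<q$, so the window contains the multiple $0$, giving $m=0$. The shift gives $k=2p-q$, which satisfies $0<k<p$ because $q>p$. With this $k$, the numbers are $0,\,p,\,2p$, all in $[0,p^2)$, so all three floors are $0$. Here the original bound $k\le p$ does hold, so this part goes through exactly as stated.
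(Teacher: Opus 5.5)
Your verdict is right: the corollary is false as stated, and your counterexample $p=3$, $q=10$ is correct. The floor triples for $k=0,1,2,3$ are $(1,0,0),(1,0,0),(1,1,0),(1,1,0)$. Your window argument in fact gives an exact criterion. Write $m=\lfloor q/p^2\rfloor$ and $r=q\bmod p^2$. A valid $k\in[0,p]$ exists if and only if $r\ge p$, because for $r<p$ and every $k\le p$ one has $q+k-2p\le mp^2-1<mp^2\le q+k$. Primes are not exempt: $p=3$ with $q=11$ or $q=19$ also fails. So restricting to the primes used in the paper does not rescue the statement.

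The paper's proof breaks in the case $q>2p$. It applies Lemma \ref{lemma:qpk} to $q-p$ and asserts that the resulting $k$ makes $q+k-2p$ divisible by $p^2$. That holds only in the lemma's second branch; when $k=0$ already works for $q-p$, nothing controls the top floor. Your example is exactly this situation: $\lfloor 7/9\rfloor=\lfloor 4/9\rfloor$ at $k=0$, yet $\lfloor 10/9\rfloor=1$. Your proof of the weakened statement with $0\le k\le 2p$ is correct. Your treatment of the ``in particular'' clause is also correct, and it uses the same choice $k=2p-q$ as the paper.

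What does not hold is your closing claim that the weakened version still suffices for the $Q_{02}$ lemma. There, $-2(k+1)+4\lfloor k/p\rfloor$ is the contribution of the $-2x^q$ term only for $k\le p$. For general $k<p^2$ that contribution is $-2[x^k]\phi_p(x)^2$. This equals $-2(2p-1-k)$ for $p-1\le k\le 2p-2$ and $0$ for $k\ge 2p-1$. When all three floors agree, the three sums cancel termwise, so the coefficient of $x^{q+k}$ in $Q_{02}/x$ is exactly $-2[x^k]\phi_p(x)^2$.

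If $q\equiv 1\pmod{p^2}$, your construction forces $k\in\{2p-1,2p\}$, and the coefficient is then $0$. For $p=3$, $q=19$ you get $k=5$ and $[x^{24}](Q_{02}/x)=0$, whereas $[x^{22}]$ and $[x^{23}]$ equal $-1$ and $-2$.

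A repair is to choose $k\in[0,2p-2]$ with $(q+k)\bmod p^2\ge 2p-1$. Such a $k$ exists whenever $p^2\nmid q$, so for every prime $q$. Then $[x^{q+k}]\bigl(\phi_p(x)^2/(1-x^{p^2})^2\bigr)=0$, and the coefficient is $-2[x^k]\phi_p(x)^2\le -2$.
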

\begin{proof}
First, suppose $2p>q$. Note that since $q>p$, $q+p>2p$, and so $p>2p-q$. Thus, we take $k=2p-q$, which is positive as we are assuming $2p>q$. In this case, observe we have 
$$\left\lfloor\frac{q+k}{p^2}\right\rfloor = \left\lfloor\frac{q+k-p}{p^2}\right\rfloor= \left\lfloor\frac{q+k-2p}{p^2}\right\rfloor=0$$
since $p>2$ and thus $2p<p^2$.

Suppose now $q>2p$, and thus, $q-p>p$. By the prior Lemma, replacing $q$ in the statement with $q-p$, we know there is a $k$ for which $\left\lfloor\frac{q+k-p}{p^2}\right\rfloor= \left\lfloor\frac{q+k-2p}{p^2}\right\rfloor$. Due to the proof of the prior Lemma, this $k$ requires $q+k-2p$ to be divisible by $p^2$. Since $2p<p^2$, as $p>2$, this means $q+k-(q+k-2p)<p^2$, and thus $\left\lfloor\frac{q+k-2p}{p^2}\right\rfloor= \left\lfloor\frac{q+k}{p^2}\right\rfloor$.
\end{proof}

\begin{lemma}\label{lem:q01_p2_comp}
Let $\alpha=\lfloor {k\over 2}\rfloor$ and $\beta=\lfloor{k-q\over 2}\rfloor$. Then 

\begin{equation}\label{eq:floorsnonneg}
\delta(\alpha \text{ even}) k -(-1)^{\alpha}\alpha +\delta(\beta \text{ even})(2q-2k)+(-1)^\beta 2\beta -1
\end{equation}

is non-negative for $q<k<2q-2$.
\end{lemma}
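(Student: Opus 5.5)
The plan is to remove the floors and parities completely by expressing $k$ and $\beta$ in terms of $\alpha$ and $q$, and then to check \eqref{eq:floorsnonneg} in a small number of parity cases. Since $q\neq 2$ is prime, $q$ is odd. It follows that $k$ and $k-q$ always have opposite parity. This gives the two substitutions:
\begin{itemize}
\item if $k$ is even, then $k=2\alpha$ and $\beta=\frac{k-q-1}{2}=\alpha-\frac{q+1}{2}$;
\item if $k$ is odd, then $k=2\alpha+1$ and $\beta=\frac{k-q}{2}=\alpha-\frac{q-1}{2}$.
\end{itemize}
In both cases $\beta\geq 0$, because $k>q$. The range $k<2q-2$ gives $\alpha\leq q-2$ in both cases.

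Next, split according to the parity of $k$ and the parities of $\alpha$ and $\beta$, which gives eight cases. The parity of $\beta$ is determined by the parity of $\alpha$, the parity of $k$, and $q \bmod 4$, but I will not use this. Instead I will evaluate the expression for each of the four parity patterns of $(\alpha,\beta)$ and each parity of $k$. In each case $\delta(\cdot)$ and $(-1)^{\bullet}$ become explicit constants, and \eqref{eq:floorsnonneg} reduces to one of the following.
\begin{itemize}
\item $\alpha,\beta$ both even: $2q-k-\alpha+2\beta-1$.
\item $\alpha$ even, $\beta$ odd: $k-\alpha-2\beta-1$.
\item $\alpha$ odd, $\beta$ even: $\alpha+2q-2k+2\beta-1$.
\item $\alpha,\beta$ both odd: $\alpha-2\beta-1$.
\end{itemize}
Substituting $k$ and $\beta$ in terms of $\alpha$ then gives a linear expression in $q$ and $\alpha$ in each case. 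For example, when $k$ is even these become $q-\alpha-2$, $q-\alpha$, $q-\alpha-2$ and $q-\alpha$, in the order listed. When $k$ is odd, the both-even case becomes $q-\alpha-1$, and I expect the remaining three odd-$k$ cases to produce similarly shaped expressions, namely $q-\alpha+c$ with a small constant $c\geq -2$.

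The conclusion then follows from the single bound $\alpha\leq q-2$. It makes every expression of the form $q-\alpha-2$, $q-\alpha-1$ or $q-\alpha$ nonnegative, which proves the lemma.

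The only real obstacle is bookkeeping, and I would guard against it in two ways.
\begin{itemize}
\item Get the endpoint bound on $\alpha$ exactly right in the case $k$ odd. Here $2\alpha+1\leq 2q-3$, so again $\alpha\leq q-2$. This is tight for the expressions containing $-2$, so it must be verified rather than assumed.
\item Double-check the four odd-$k$ computations, since one of them might give $q-\alpha-3$. If that happens, I would use the parity constraint in that case (for instance, that $\alpha$ odd and $q$ odd forces $\alpha\neq q-2$, or the reverse) to sharpen the bound to $\alpha\leq q-3$.
\end{itemize}
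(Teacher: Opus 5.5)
Your proof is correct. It uses the same eight-way case split as the paper: the parities of $\alpha$ and $\beta$, refined by the parity of $k$. The way you evaluate each case is genuinely different, though.

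The paper keeps $k$ inside floors and ceilings and manipulates them with identities such as $\lfloor x/2\rfloor=\lceil (x+1)/2\rceil-1$. It then closes each case with a separate bound in terms of $k$. You instead substitute $k=2\alpha$ or $k=2\alpha+1$, and $\beta=\alpha-\frac{q+1}{2}$ or $\beta=\alpha-\frac{q-1}{2}$. Every case then becomes $q-\alpha+c$ with $c\in\{0,-1,-2\}$, and the whole lemma reduces to the single inequality $\alpha\le q-2$.

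The three odd-$k$ cases you only predicted come out as you expected. For $k$ odd, the parity patterns (even, odd), (odd, even), (odd, odd) of $(\alpha,\beta)$ give $q-\alpha-1$, $q-\alpha-2$ and $q-\alpha-2$ respectively. So no $q-\alpha-3$ occurs and your fallback is unnecessary; you should simply write these three computations out.

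What your route buys is exact values rather than lower bounds. First, it shows the lemma is sharp. The expression vanishes at $k=2q-3$, and also at $k=2q-4$ when $q\equiv 1\pmod 4$; for instance, it is $0$ at $q=5$ with $k=6$ and with $k=7$. Second, it is less error-prone than the floor manipulations. In the paper's Case 2(a) with $k$ even, the true value is $q-\frac{k}{2}-2$, not the stated $\lfloor (2q-k)/2\rfloor-1=q-\frac{k}{2}-1$. The paper's conclusion survives there only because $k\le 2q-4$ in that case.
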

\begin{proof}

We proceed by cases on the parity of $\alpha$ and $\beta$. Throughout, we freely use the identity \[\left\lfloor{x\over 2}\right\rfloor=\left\lceil {x+1\over 2}\right\rceil-1\]

\noindent\textbf{Case 1:}
We start with $\alpha$ being even. In this case, while we originally had the assumption $k<2q-2$, we may  $k\leq 2q-4$, as if $k=2q-3$, we have 
\[\alpha=\left\lfloor {2q-3\over 2}\right\rfloor=\lfloor q-{3\over 2}\rfloor=q-2.\].

\noindent \textbf{Case 1(a):}
Now, if  $\beta$ is additionally even, Equation \ref{eq:floorsnonneg} becomes

\begin{align*}
-\alpha+2q-k+2\beta-1&=\left\lceil{-k\over 2}-k \right\rceil+ 2\left\lceil q+{k-q+1\over 2}\right\rceil-3\\
&=\left\lceil{-3k\over 2} \right\rceil+ 2\left\lceil {k+q+1\over 2}\right\rceil-3.
\end{align*}

Observe $-3k$ is even if and only if $k+q+1$ is even, as $q$ is an odd prime. When both are even, both values in the ceiling functions are integers, and we get ${2q+2-k\over 2}-3$, and since we may assume $k\leq 2q-4$, this becomes
\[{2q+2-k\over 2}-3\geq {6\over 2}-3=0\]
as desired. When both are odd, then $k-q$ is even, and we have
\begin{align*}
\left\lceil{-3k\over 2} \right\rceil+ 2\left\lceil {k+q+1\over 2}\right\rceil-3&=
\left\lceil{-3k\over 2} \right\rceil+ k+q+2\left\lceil {1\over 2}\right\rceil-3\\
&=
\left\lceil{2q-k\over 2} \right\rceil-1,
\end{align*}

Which must be nonnegative as $k<2q-2$. 

\noindent\textbf{Case 1(b):} Suppose $\beta$ is odd. Then Equation \ref{eq:floorsnonneg} becomes
\begin{align*}
k-\alpha -2\beta-1&=\left\lceil {k\over 2}\right\rceil+2\left\lceil{q-k\over 2}\right\rceil-1\\
&= \left\lceil {q\over 2}\right\rceil+\left\lceil{q-k\over 2}\right\rceil-1,
\end{align*}
where the last equality holds as $k$ is even if and only if $q-k$ is odd, and so one of ${k\over 2}$ or $q-k\over 2$ is always an integer. As at the end of Case 1(a), we proceed in two ways. Why $k$ is odd, ${q-k\over 2}$ is an integer and the above becomes
\[\left\lfloor{2q-k\over 2}\right\rfloor -1\]
which is nonnegative provided $k\leq 2q-2$. Otherwise, when $k$ is even, the above becomes
\[2\left\lfloor{q\over 2}\right\rfloor -{k\over 2}-1=q-{k\over 2}\geq q-{2q-2\over 2}=1.\]

\noindent \textbf{Case 2:} Now we assume that $\alpha$ is odd. 

\noindent\textbf{Case 2(a):}
When $\beta$ is even, Equation \ref{eq:floorsnonneg} becomes

\begin{align*}
\alpha+2q-2k+2\beta-1
&=\left\lfloor{-3k\over 2}\right\rfloor +2\left\lfloor {k+q\over 2}\right\rfloor-1\\
&=\left\lfloor{2q-k\over 2}\right\rfloor-1
\end{align*}
where the last equality holds since $-3k$ is even if and only if $k+q$ is odd. Since $k<2q-2$, this is always nonnegative. 

\noindent \textbf{Case 2(b):} If $\beta$ is odd, we have 
\begin{align*}
\alpha-2\beta-1&=\alpha+2\left\lceil {q-k\over 2}\right\rceil -1\\
&=\left\lceil{k+1\over 2}\right\rceil+2\left\lceil {q-k\over 2}\right\rceil -2.
\end{align*}
Note $k+1$ is even if and only if $q-k$ is even. When both are even, the fractions in the above ceiling functions are integers, and we get
\[{2q-k+1\over 2}-2\geq {4\over 2}-2=0\]
as $k\leq 2q-3$. If both $k+1$ and $q-k$ are odd, and thus $k$ is even, we have

\begin{align*}
\left\lceil{k+1\over 2}\right\rceil+2\left\lceil {q-k\over 2}\right\rceil -2&=\left\lceil {1\over 2}\right\rceil+\left\lceil {q-k\over 2}\right \rceil+\left\lceil {q\over 2}\right \rceil-2\\
&=\left\lceil {q-k\over 2}\right \rceil+\left\lceil{q+1\over 2}\right\rceil -1\\
&=\left\lceil {2q-k+1\over 2}\right \rceil -1,
\end{align*}
where here we leverage the fact that $q$ is odd. This is nonnegative since $k<2q-2$.

\end{proof}

\bibliographystyle{plain} % We choose the "plain" reference style
\bibliography{ref[1]}

\end{document}